\newtheorem{theorem}{Theorem}
\theoremstyle{plain}
\newtheorem{corollary}{Corollary}
\newtheorem{definition}{Definition}
\newtheorem{lemma}{Lemma}
\newtheorem{proposition}{Proposition}
\newtheorem{remark}{Remark}
\numberwithin{equation}{section}
\begin{document}
\title[$p $-variational Calculus]{$p $-variational Calculus}
\author{\.{I}lker Gen\c{c}t\"{u}rk}
\address{Department of Mathematics, K{\i }r{\i }kkale University,
	Yahsihan,
	71450 K{\i }r{\i }kkale, Turkey}
\email{ilkergencturk@gmail.com}

\date{\today }
\subjclass[2000]{Primary 49K05, 49K15; Secondary 47B39}
\keywords{$ p $-calculus, Euler-Lagrange difference equations, calculus of variations}

\begin{abstract}
The aim of this paper is to bring together a new type of quantum calculus, namely $p $-calculus, and variational calculus. We develop $p $-variational calculus  and obtain a necessary
optimality condition of Euler-Lagrange type and a sufficient optimality
condition.
\end{abstract}

\maketitle

%\dedicatory{Dedicated to the memory of S. Bach.}
%\thanks{This paper is in final form and no version of it will be submitted for publication elsewhere.}

\section{Introduction}
One of interesting fields in mathematics is quantum calculus known as calculus without limits. It is well-known that it changes the classical derivative by a quantum difference operator. It has important applications in physics and chemical physics. Moreover, it plays an important role in several fields of mathematics such as orthogonal polynomials, analytic number theory, geometric function theory, combinatorics etc. \cite{AnnMan,HarmanI,HarmanII,KocaGenAyd,pashaev2014q}.

There are several types of quantum calculus such as $ h $-calculus, $ q $-calculus and the others which some generalizations of $ h $-calculus and $ q $-calculus. In the beginning of the twentieth century, Jackson introduced the $ q $-calculus with following notation $$ \frac{f(qt)-f(t)}{(q-1)t} $$ where $ q $ is a fixed number different from $ 1 $, $ t \neg 0 $ and $ f $ is areal function. It is clear that if $ f $ is differentiable at $ t \neq 0 $, then $f'(t)=\lim\limits_{q \to 1} \frac{f(qt)-f(t)}{(q-1)t}$. We refer to reader to \cite{KacCheung}, \cite{Ernstcomp}, for basic concepts of quantum calculus and history of $ q $-calculus.

In \cite{NeaMeh}, authors produced a new type of quantum calculus with the following expression $$ \frac{f(t^p)-f(t)}{t^p-t} $$ and above notation is definition of the $ p $-derivative. Moreover, some new properties of functions and Steffensen inequality in $ p $-calculus \cite{NeaMeh2,yadollahzadeh2019steffensen} and $ pq $-calculus which a generalization of $ p $-calculus \cite{genccturk2019new} were given.

The calculus of variation is one of the classical subjects in mathematics and it establishes the relation between other branches of mathematics such as differential equations, geometry, and physics. Also it has important applications in mechanics, engineering, economics, biology and electrical engineering. The calculus of variation deals with finding extrema and, in this sense, one can say that it is a branch of optimization. Because of its importance, studies based in quantum calculus are occurred. In this regard, we refer \cite{aldwoah2011power,bangerezako2004variational,da2012q,da2018general,da2012higher} to readers to for details in calculus of variations based in different quantum operators.

The main objective of this paper is to provide a necessary optimality
condition and a sufficient optimality condition for the $p$-variational
problem
\begin{align}
\mathcal{L}[y]
&=\int \limits_{a}^{b}L\bigg(t,y(t^{p}),D_{p}[y](t)
\bigg)d_{p}t\longrightarrow extremize  \notag \\
&  \tag*{(P)}  \label{problem} \\
y &\in \mathcal{Y}^{{}}\big( [a, b ]_{p},%
%TCIMACRO{\U{211d} }%
%BeginExpansion
\mathbb{R}
%EndExpansion
\big),~y(a)=\alpha,~y(b)=\beta   \notag
\end{align}%
where a and b are fixed real numbers and by extremize, we mean minimize or
maximize. Problem \ref{problem} with detailed  will be given in  Section 3. %
Additionally, Lagrangian $L$ has the desired the following hypotheses:

\begin{enumerate}
	\item[(H1)\label{H1}] $(u,v)\rightarrow L(t,u,v)$ is a $C^{1}(%
	%TCIMACRO{\U{211d} }%
	%BeginExpansion
	\mathbb{R}
	%EndExpansion
	^{2},%
	%TCIMACRO{\U{211d} }%
	%BeginExpansion
	\mathbb{R}
	%EndExpansion
	)$ function for any $t\in I;$
	
	\item[(H2)]\label{H2} $t\rightarrow L\big(t,y(t^{p}),D_{p}[y](t)\big)$ is continuous at$1$ for
	any admissible function $y;$
	
	\item[(H3)] \label{H3} functions $t\rightarrow \partial _{i+2}L\big(t,y(t^{p}),D_{p}[y](t)\big)$
	belongs to $\mathcal{Y}^{{}}\big( [a,b]_{p},%
	%TCIMACRO{\U{211d} }%
	%BeginExpansion
	\mathbb{R}
	%EndExpansion
	\big) $ for all admissible $y$, $i=0,1;$
\end{enumerate}
where $I$ is an interval of $%
%TCIMACRO{\U{211d} }%
%BeginExpansion
\mathbb{R}
%EndExpansion
$ containing 1; $a,b\in I,$ $a<b,$ and $\partial _{j}L$ denotes the partial
derivative of $L$ with respect to its $j$th argument.

This paper is organized as follows. In section 2 we call up some necessary definitions and  theorems about $ p $-calculus. In section 3, we give our results for the $ p $-variational calculus.

\section{Preliminaries}

Let $p\in (0,1)$ be and consider interval $J$ =$[0,\infty )$ . We will
denote by $J_{p}$ the set $J_{p}:=\{x^{p}:x\in J\}$. Throughout this paper,
we assume that function $f(x)$ is defined on $J$.

We need some definitions and fundamental results on $p$-calculus to prove
our results.\cite{NeaMeh}

\begin{definition}
	Consider an arbitrary function $f(x)$. Its $p$-derivative is defined as%
	\begin{equation*}
	D_{p}f(x)=\frac{f(x^{p})-f(x)}{x^{p}-x},\text{ if }x\neq 0,1,
	\end{equation*}%
	and%
	\begin{equation*}
	D_{p}f(0)=\lim_{x\rightarrow 0^{+}}D_{p}f(x),\text{ \ }D_{p}f(1)=\lim_{x%
		\rightarrow 1}D_{p}f(x).
	\end{equation*}
\end{definition}

\begin{corollary}
	If $f(x)$ is differentiable, then~$\lim \limits_{p\rightarrow
		1}D_{p}f(x)=f^{\prime }(x),$ and also if $f^{\prime }(x)$ exists in a
	neighborhood of $x=0$, $x=1$ and is continuous at $x=0$ and $x=1$, then we
	have%
	\begin{equation*}
	D_{p}f(0)=f_{+}^{\prime }(0),~D_{p}f(1)=f^{\prime }(1).
	\end{equation*}
\end{corollary}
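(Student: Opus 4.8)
The plan is to handle the two assertions separately, because the first is a statement about the limit $p\to 1$ at a fixed point, whereas the second unwinds the defining one-sided limits $x\to 0^{+}$ and $x\to 1$ at a fixed $p$. For the first assertion I would fix $x\neq 0,1$ and note that $p\mapsto x^{p}$ is continuous with $x^{p}\to x$ as $p\to 1$, while $x^{p}\neq x$ for every $p\neq 1$ (indeed $x^{p}=x$ forces $(p-1)\ln x=0$, which is impossible when $x\neq 0,1$ and $p\neq 1$). Writing $u=x^{p}$, the quotient $D_{p}f(x)$ is exactly the Newton difference quotient $\frac{f(u)-f(x)}{u-x}$ taken along values $u\to x$ with $u\neq x$, so differentiability of $f$ at $x$ together with the composition-of-limits theorem gives $\lim_{p\to 1}D_{p}f(x)=f'(x)$.

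For the second assertion I would invoke the Mean Value Theorem. Fixing $p\in(0,1)$ and taking $x$ near the relevant endpoint with $x\neq 0,1$, one has $x^{p}\neq x$ (for $0<x<1$, $x^{p}>x$, and for $x>1$, $x^{p}<x$), and since $f'$ exists on a neighborhood of the endpoint, $f$ is continuous on the closed interval with endpoints $x$ and $x^{p}$ and differentiable on its interior. Hence there is $\xi=\xi(x)$ strictly between $x$ and $x^{p}$ with $D_{p}f(x)=\frac{f(x^{p})-f(x)}{x^{p}-x}=f'(\xi)$. To compute $D_{p}f(0)$, I would let $x\to 0^{+}$: both $x$ and $x^{p}$ tend to $0^{+}$, so the trapped point satisfies $\xi\to 0^{+}$, and right-continuity of $f'$ at $0$ yields $D_{p}f(0)=\lim_{x\to 0^{+}}f'(\xi)=f'_{+}(0)$. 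The computation of $D_{p}f(1)$ is identical: as $x\to 1$ both arguments tend to $1$, forcing $\xi\to 1$, and continuity of $f'$ at $1$ gives $D_{p}f(1)=f'(1)$.

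The first assertion is essentially immediate once the substitution $u=x^{p}$ and the observation $x^{p}\neq x$ are recorded. The part requiring the most care is the endpoint analysis in the second assertion: one must check that $x$ and $x^{p}$ lie on the same side of the endpoint so that the intermediate point $\xi$ is genuinely squeezed to it, and the hypotheses that $f'$ exists on a full neighborhood and is continuous at the endpoint are exactly what legitimise both the application of the Mean Value Theorem and the final passage to the limit. At $x=0$ the one-sidedness (and hence the appearance of $f'_{+}(0)$ rather than a two-sided derivative) is forced by the domain $J=[0,\infty)$.
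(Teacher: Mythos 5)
Your proof is correct. There is, however, nothing in the paper to compare it against: this corollary is stated in the preliminaries without proof, being imported from the cited reference of Neamaty and Tourani on $p$-calculus, so your argument stands as a self-contained justification rather than an alternative to an in-paper proof. Both halves are sound. For $x\neq 0,1$, the substitution $u=x^{p}$, together with the two observations that $u\to x$ as $p\to 1$ and that $u\neq x$ whenever $p\neq 1$, reduces the first claim to the definition of $f'(x)$ via composition of limits. For the endpoint values, the Mean Value Theorem (legitimate because $f'$ exists on a full neighborhood, so $f$ is continuous on the closed interval between $x$ and $x^{p}$ and differentiable inside) yields $\xi$ strictly between $x$ and $x^{p}$ with $D_{p}f(x)=f'(\xi)$, and the squeeze $\xi\to 0^{+}$ (resp.\ $\xi\to 1$) plus continuity of $f'$ at the endpoint gives $D_{p}f(0)=f'_{+}(0)$ and $D_{p}f(1)=f'(1)$; you are also right that the one-sidedness at $0$ is forced by the domain $J=[0,\infty)$. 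One small observation: the endpoint identities hold under the weaker hypothesis that $f$ is merely (one-sidedly) differentiable at the endpoint, since one can write $D_{p}f(x)$ as a weighted combination of difference quotients based at the endpoint, e.g.\ near $1$,
\begin{equation*}
D_{p}f(x)=\frac{f(x^{p})-f(1)}{x^{p}-1}\cdot\frac{x^{p}-1}{x^{p}-x}-\frac{f(x)-f(1)}{x-1}\cdot\frac{x-1}{x^{p}-x},
\end{equation*}
where the two weights sum to $1$ and stay bounded as $x\to 1$; letting $x\to 1$ then gives $f'(1)$ directly. Your MVT route is cleaner, though, and consumes exactly the hypotheses the corollary supplies.
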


\begin{definition}
	The $p$-derivative of higher order of function $f(x)$ is defined by%
	\begin{equation*}
	\left( D_{p}^{0}f\right) (x)=f(x),~\left( D_{p}^{n}f\right)
	(x)=D_{p}\left( D_{p}^{n-1}f\right) (x),\text{ }n\in
	%TCIMACRO{\U{2115} }%
	%BeginExpansion
	\mathbb{N}
	%EndExpansion
	.
	\end{equation*}
\end{definition}

We note that the $p$-derivative has the following properties.

\begin{theorem}
	Let $f$ and $g$ be $p$-differentiable on $J$, let $\alpha $ and $\beta \in
	%TCIMACRO{\U{211d} }%
	%BeginExpansion
	\mathbb{R}
	%EndExpansion
	$ and $x\in J_{p}$. Then
	
	\begin{enumerate}
		\item $D_{p}f\equiv 0,$ then $f$ is a constant. Conversely, $D_{p}c\equiv 0$
		for any $c.$
		
		\item $D_{p}(\alpha f+\beta g)(x)=\alpha D_{p}f(x)+\beta D_{p}g(x).$
		
		\item $D_{p}(f(x)g(x))=g(x^{p})D_{p}f(x)+f(x)D_{p}g(x).$
		
		\item $$D_{p}\left( \frac{f}{g}\right) (x)=\frac{g(x)D_{p}f(x)-f(x)D_{p}g(x)}{%
			g(x)g(x^{p})}.$$
	\end{enumerate}
\end{theorem}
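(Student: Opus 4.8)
The plan is to derive parts (2)--(4) directly from the defining quotient $D_p f(x)=\frac{f(x^p)-f(x)}{x^p-x}$, which is legitimate because $x\in J_p$ with $x\neq 0,1$ guarantees $x^p-x\neq 0$; part (1) is the one statement that needs more than algebra, so I would treat it last.

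For linearity (2), I would substitute $\alpha f+\beta g$ into the definition, split the numerator as $\alpha\big(f(x^p)-f(x)\big)+\beta\big(g(x^p)-g(x)\big)$, and divide through by $x^p-x$; the two difference quotients are exactly $D_pf(x)$ and $D_pg(x)$. For the product rule (3), the decisive move is the standard add-and-subtract step: write the numerator $f(x^p)g(x^p)-f(x)g(x)$ as $\big(f(x^p)-f(x)\big)g(x^p)+f(x)\big(g(x^p)-g(x)\big)$, after which dividing by $x^p-x$ yields $g(x^p)D_pf(x)+f(x)D_pg(x)$. The quotient rule (4) is handled in the same spirit: starting from $\frac{1}{x^p-x}\Big(\frac{f(x^p)}{g(x^p)}-\frac{f(x)}{g(x)}\Big)$, I would put the inner difference over the common denominator $g(x)g(x^p)$, then add and subtract $f(x)g(x)$ in the resulting numerator to split it as $g(x)\big(f(x^p)-f(x)\big)-f(x)\big(g(x^p)-g(x)\big)$; this needs $g(x)$ and $g(x^p)$ nonzero, which I would assume at the relevant points.

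The main obstacle is the forward implication in (1). The vanishing $D_pf\equiv 0$ only gives the functional equation $f(x^p)=f(x)$ for all $x\neq 0,1$, which is strictly weaker than constancy, so the real work is in upgrading it. My plan is to exploit the dynamics of the map $x\mapsto x^p$ for $p\in(0,1)$: for any $x>0$ with $x\neq 1$ the iterates $x^{p^n}$ form a monotone sequence trapped between the starting point and the fixed point $1$, hence $x^{p^n}\to 1$. Iterating the functional equation gives $f(x)=f(x^{p^n})$ for every $n$, and passing to the limit using continuity of $f$ at $1$ yields $f(x)=f(1)$, so $f$ is constant. The converse half of (1) is immediate, since $D_pc=\frac{c-c}{x^p-x}=0$. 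I expect the only genuinely delicate point to be justifying the limit passage, that is, the regularity (continuity at $1$) of $f$ on which the iteration argument relies.
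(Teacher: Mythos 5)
Your algebra for parts (2)--(4) is correct: once one notes that $x\in J_p$, $x\neq 0,1$ makes $x^p-x\neq 0$, the linearity splitting, the add-and-subtract of $f(x)g(x^p)$ for the product rule, and the common-denominator-plus-add-and-subtract step for the quotient rule (with $g(x),g(x^p)\neq 0$) all go through exactly as you describe. There is, however, nothing in the paper to compare this against: the theorem sits in the Preliminaries and is imported from the cited reference [NeaMeh] with no proof given, so your proposal must stand on its own.

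It does not quite stand, and the failure is in part (1), at precisely the point you flagged but deferred. The continuity of $f$ at $1$ that your limit passage needs is not a ``delicate point to justify''; it is unobtainable from the stated hypotheses, and without it the statement is false. Being $p$-differentiable imposes no regularity here: once $D_pf\equiv 0$ away from $\{0,1\}$, the limits defining $D_pf(0)$ and $D_pf(1)$ exist automatically (they are limits of the zero function), so the hypothesis $D_pf\equiv 0$ encodes only the functional equation $f(x^p)=f(x)$, and that equation admits non-constant solutions. Concretely, take $f(x)=\sin\bigl(\tfrac{2\pi}{\ln p}\,\ln\lvert\ln x\rvert\bigr)$ for $x\neq 0,1$ and $f(0)=f(1)=0$. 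Since $\ln\lvert\ln (x^p)\rvert=\ln p+\ln\lvert\ln x\rvert$, one gets $f(x^p)=f(x)$ for all $x\neq 0,1$, hence $D_pf\equiv 0$ on all of $J$, yet $f$ is nowhere near constant. A second, smaller gap: your orbit argument never reaches $x=0$ at all, since $0$ is a fixed point of $x\mapsto x^p$ and the functional equation says nothing about $f(0)$; so even granting continuity at $1$ you only get constancy on $(0,\infty)$, and you need continuity at $0$ to pin down $f(0)$. The repair is to state part (1) with the added hypothesis that $f$ is continuous at $0$ and $1$ (harmless for this paper, since every admissible function in $\mathcal{Y}$ is required to be continuous at $0$ and $1$), or else to weaken the conclusion to constancy of $f$ on each orbit set $[t]_p$. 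With continuity at $0$ and $1$ assumed, your monotone-iteration argument is correct and complete.
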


\begin{definition}
	A function $F(x)$ is a $p$-antiderivative of $f(x)$ if $D_{p}F(x)=f(x)$. It
	is denoted by%
	\begin{equation*}
	F(x)=\int f(x)d_{p}x.
	\end{equation*}
\end{definition}

\begin{definition}
	The $p$-integral of $f(x)$ is defined to be the series%
	\begin{equation*}
	\sum \limits_{j=0}^{\infty }(x^{p^{j}}-x^{p^{j+1}})f(x^{p^{j}}).
	\end{equation*}
\end{definition}

In \cite{NeaMeh}, authors considered the following three cases to define the
definite $p $-integral.

\begin{enumerate}
	\item[\textbf{Case 1.}] Let $1<a<b$ where $a,b$ $\in
	%TCIMACRO{\U{211d} }%
	%BeginExpansion
	\mathbb{R}
	%EndExpansion
	$ and function $f$ is defined on $(1,b].$ Notice that for any $j\in
	\{0,1,2,3,\ldots \},$ $b^{p^{j}}\in (1,b].$
	
	\begin{definition}
		The $p$-integral of a function $f(x)$ on the interval $(1,b]$ is defined as%
		\begin{equation}
		\int \limits_{1}^{b}f(x)d_{p}x=\lim_{n\rightarrow \infty }\sum
		\limits_{j=0}^{N}(b^{p^{j}}-b^{p^{j+1}})f(b^{p^{j}})=\sum
		\limits_{j=0}^{\infty }(b^{p^{j}}-b^{p^{j+1}})f(b^{p^{j}}),
		\label{case1defint}
		\end{equation}%
		and%
		\begin{equation*}
		\int \limits_{a}^{b}f(x)d_{p}x:=\int \limits_{1}^{b}f(x)d_{p}x-\int
		\limits_{1}^{a}f(x)d_{p}x.
		\end{equation*}
	\end{definition}
	
	\item[\textbf{Case 2.}] Let $0<b<1$ where $b$ $\in
	%TCIMACRO{\U{211d} }%
	%BeginExpansion
	\mathbb{R}
	%EndExpansion
	$ $.$ Notice that for any $j\in \{0,1,2,3,\ldots \},$ $b^{p^{j}}\in \lbrack
	b,1)$ and $b^{p^{j}}<b^{p^{j+1}}.$
	
	\begin{definition}
		The $p$-integral of a function $f(x)$ on the interval $[b,1)$ is defined as%
		\begin{equation*}
		\int \limits_{b}^{1}f(x)d_{p}x=\lim_{n\rightarrow \infty }\sum
		\limits_{j=0}^{N}(b^{p^{j+1}}-b^{p^{j}})f(b^{p^{j}})=\sum
		\limits_{j=0}^{\infty }(b^{p^{j+1}}-b^{p^{j}})f(b^{p^{j}}).
		\end{equation*}%
		The $p$-integrals defined above are also denoted by
		\begin{equation*}
		\int \limits_{1}^{b}f(x)d_{p}x=I_{p^{+}}f(b),\text{ \  \ }\int
		\limits_{b}^{1}f(x)d_{p}x=I_{p^{-}}f(b).
		\end{equation*}
	\end{definition}
	
	\item[\textbf{Case 3}] Let $0<a<b<1$ where $a,b$ $\in
	%TCIMACRO{\U{211d} }%
	%BeginExpansion
	\mathbb{R}
	%EndExpansion
	$ $.$ Notice that for any $j\in \{0,1,2,3,\ldots \},$ $b^{p^{-j}}\in (0,b]$
	and $b^{p^{-j-1}}<b^{p^{-j}}.$
	
	\begin{definition}
		The $p$-integral of a function $f(x)$ on the interval $(0,b](b<1)$ is
		defined as%
		\begin{eqnarray*}
			I_{p}f(b) &=&\int \limits_{0}^{b}f(x)d_{p}x=\lim_{n\rightarrow \infty }\sum
			\limits_{j=0}^{N}(b^{p^{-j}}-b^{p^{-j-1}})f(b^{p^{-j-1}}) \\
			&=&\sum \limits_{j=0}^{\infty }(b^{p^{-j}}-b^{p^{-j-1}})f(b^{p^{-j-1}})
		\end{eqnarray*}%
		and%
		\begin{equation*}
		\int \limits_{a}^{b}f(x)d_{p}x:=\int \limits_{0}^{b}f(x)d_{p}x-\int
		\limits_{0}^{a}f(x)d_{p}x.
		\end{equation*}
	\end{definition}
\end{enumerate}

\begin{remark}\cite{NeaMeh}
	If $p\in (0,1),$ then for any $j\in \{0,\pm 1,\pm 2,\pm 3,\ldots \},$ we
	have $p^{p^{j}}\in (0,1),$ $p^{p^{j}}<p^{p^{j+1}}$ and
	\begin{equation}\label{01int}
	\int \limits_{0}^{1}f(x)d_{p}x=\sum \limits_{j=-\infty }^{\infty }\int
	\limits_{p^{p^{j}}}^{p^{p^{j+1}}}f(x)d_{p}x=\sum \limits_{j=-\infty
	}^{\infty }(p^{p^{j+1}}-p^{p^{j}})f(p^{p^{j}}).
	\end{equation}
\end{remark}

By definitions of $p $-integrals in \cite{NeaMeh}, we give a more general formula:

\begin{corollary}[Cf. Corollary 4.12, \cite{NeaMeh}]
	Suppose $0 \leq a<1<b $. The $p$-integral of a function $f(x)$ on the
	interval $[a,b]$ is defined as
	\begin{equation}  \label{key}
	\int \limits_{a}^{b}f(t)d_{p}t=\int \limits_{0}^{b}f(t)d_{p}t -\int
	\limits_{0}^{a}f(t)d_{p}t
	\end{equation}
	where
	\begin{equation}
	\int \limits_{0}^{x}f(t)d_{p}t=\sum \limits_{j=0}^{\infty
	}(x^{p^{-j}}-x^{p^{-j-1}})f(x^{p^{-j-1}}), ~\text{if } 0 \leq x <1,
	\end{equation}
	and
	\begin{align}
	\int \limits_{0}^{x}f(t)d_{p}t=&\int \limits_{0}^{1}f(t)d_{p}t+\int \limits_{1}^{x}f(t)d_{p}t\\
	=&\sum \limits_{j=-\infty
	}^{\infty }(p^{p^{j+1}}-p^{p^{j}})f(p^{p^{j}}) \\
	&+\sum
	\limits_{j=0}^{\infty }(x^{p^{j+1}}-x^{p^{j}})f(x^{p^{j}}) ~\text{if }  x >1,
	\end{align}
	provided the series converges at $x=a $ and $x=b $. In that case, $f $ is
	said to be $p $-integrable on $[a,b] $.
\end{corollary}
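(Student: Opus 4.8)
The plan is to read the asserted identity not as something requiring fresh computation but as an assembly of the definite $p$-integrals already pinned down by Cases 1--3 and the Remark, so that the only genuine content is the additivity of the integral under splitting of the interval at the fixed points of $t\mapsto t^{p}$. Since $0\le a<1<b$, the subtraction identity \eqref{key} reduces the entire claim to producing explicit series for the two base objects $\int_{0}^{a}f\,d_{p}t$ with $0\le a<1$ and $\int_{0}^{b}f\,d_{p}t$ with $b>1$; once those two series are established, the formula for $\int_{a}^{b}f\,d_{p}t$ follows immediately by subtraction.

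First I would treat the lower endpoint. Because $0\le a<1$, the integral $\int_{0}^{a}f\,d_{p}t$ is precisely the object $I_{p}f(a)$ introduced in Case 3, whose defining series is $\sum_{j=0}^{\infty}(a^{p^{-j}}-a^{p^{-j-1}})f(a^{p^{-j-1}})$; this reproduces the first displayed line verbatim, with the convention that the sum degenerates to $0$ when $a=0$ (each increment $a^{p^{-j}}-a^{p^{-j-1}}$ then vanishes). Next I would treat the upper endpoint, which lies beyond the reach of any single earlier definition since $b>1$. Here I split at the fixed point $t=1$ and write $\int_{0}^{b}=\int_{0}^{1}+\int_{1}^{b}$. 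For the first piece I invoke the Remark, equation \eqref{01int}, giving the bi-infinite sum $\sum_{j=-\infty}^{\infty}(p^{p^{j+1}}-p^{p^{j}})f(p^{p^{j}})$; for the second piece I invoke Case 1, equation \eqref{case1defint}, giving $\sum_{j=0}^{\infty}(b^{p^{j}}-b^{p^{j+1}})f(b^{p^{j}})$. Setting $x=b$ and adding the two yields the second displayed line.

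The step I expect to carry the real weight is the justification of the splitting $\int_{0}^{b}=\int_{0}^{1}+\int_{1}^{b}$ together with the accompanying convergence bookkeeping. Two points need care. First, the interval $[0,1]$ demands a two-sided sum because the sampling grid $\{p^{p^{j}}\}$ accumulates at \emph{both} fixed points $0$ and $1$, whereas $[1,b]$ requires only a one-sided sum accumulating at $1$; I must check that gluing these at $t=1$ neither double-counts the shared sample nor leaves a gap, i.e. that the two index ranges are genuinely complementary. Second, I must track the orientation conventions inherited from \eqref{case1defint} and \eqref{01int} so that each increment appears with the correct sign, noting that for $b>1$ and $p\in(0,1)$ one has $b^{p^{j}}>b^{p^{j+1}}$, so the positive increment on $[1,b]$ is $b^{p^{j}}-b^{p^{j+1}}$. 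Finally, the clause ``provided the series converges at $x=a$ and $x=b$'' is exactly the hypothesis under which each of the three constituent series converges absolutely; it is precisely this hypothesis that legitimizes the term-by-term additivity used in \eqref{key}, and it is consumed at this last step.
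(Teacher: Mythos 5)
Your proposal is correct and takes essentially the same route as the paper: the paper gives no explicit proof, presenting the corollary as a direct assembly of the definitions from \cite{NeaMeh} (Case 3 for $\int_{0}^{a}f\,d_{p}t$ with $0\le a<1$, the Remark \eqref{01int} for $\int_{0}^{1}f\,d_{p}t$, and Case 1 \eqref{case1defint} for $\int_{1}^{b}f\,d_{p}t$), glued by splitting at the fixed point $1$ and subtracting, exactly as you do. One point your orientation bookkeeping exposes: assembling via \eqref{case1defint} yields the increment $(x^{p^{j}}-x^{p^{j+1}})$ in the second series, so the factor $(x^{p^{j+1}}-x^{p^{j}})$ printed in the corollary is a sign typo, and it is your assembled formula, not the printed one, that is consistent with the decomposition $\int_{0}^{x}f\,d_{p}t=\int_{0}^{1}f\,d_{p}t+\int_{1}^{x}f\,d_{p}t$.
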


\begin{definition}
	The $p$-integral of higher order of a function $f$ is given by%
	\begin{equation*}
	\left( I_{p}^{0}f\right) (x)=f(x),\text{ \ }\left( I_{p}^{n}f\right)
	(x)=I_{p}\left( I_{p}^{n-1}f\right) (x),\text{ }n\in
	%TCIMACRO{\U{2115} }%
	%BeginExpansion
	\mathbb{N}
	%EndExpansion
	.
	\end{equation*}

\end{definition}
Following lemmas are given to obtain fundamental theorem of $ p $-calculus by authors in \cite{NeaMeh}.

\begin{lemma}
	If $x>1$ and $p\in (0,1)$, then $D_{p}I_{p^{+}}f(x)=f(x)$, and also if
	function $f$ is continuous at $x=1$, then we have $%
	I_{p^{+}}D_{p}f(x)=f(x)-f(1).$
\end{lemma}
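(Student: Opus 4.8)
The plan is to work directly from the series definition of $I_{p^{+}}$ supplied in Case 1, namely, for $x>1$,
\begin{equation*}
I_{p^{+}}f(x)=\sum_{j=0}^{\infty}\bigl(x^{p^{j}}-x^{p^{j+1}}\bigr)f\bigl(x^{p^{j}}\bigr),
\end{equation*}
and to exploit the fact that applying either $D_{p}$ or $I_{p^{+}}$ turns this defining series into a telescoping sum. Throughout I keep in mind that for $p\in(0,1)$ and $x>1$ the points $x^{p^{j}}$ decrease toward $1$ and remain in $(1,b]$, so every evaluation below is legitimate and the defining series converges by the standing $p$-integrability of $f$.

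For the first identity I would set $g(x)=I_{p^{+}}f(x)$ and compute $g(x^{p})$ by substituting $x^{p}$ for $x$ in the series. Since $(x^{p})^{p^{j}}=x^{p^{j+1}}$, reindexing with $k=j+1$ gives $g(x^{p})=\sum_{k=1}^{\infty}\bigl(x^{p^{k}}-x^{p^{k+1}}\bigr)f\bigl(x^{p^{k}}\bigr)$. Subtracting the two series, every term of index $\geq 1$ cancels and only the $j=0$ term of $g(x)$ survives, so that $g(x)-g(x^{p})=\bigl(x-x^{p}\bigr)f(x)$. Dividing by $x^{p}-x$ as prescribed by the definition of the $p$-derivative then yields $D_{p}I_{p^{+}}f(x)=\dfrac{g(x^{p})-g(x)}{x^{p}-x}=f(x)$, which is the first claim; note that this half requires no continuity hypothesis.

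For the second identity I would insert the explicit value
\begin{equation*}
D_{p}f\bigl(x^{p^{j}}\bigr)=\frac{f\bigl(x^{p^{j+1}}\bigr)-f\bigl(x^{p^{j}}\bigr)}{x^{p^{j+1}}-x^{p^{j}}}
\end{equation*}
into the series for $I_{p^{+}}\bigl(D_{p}f\bigr)(x)$. The weight $x^{p^{j}}-x^{p^{j+1}}$ cancels exactly against the denominator $x^{p^{j+1}}-x^{p^{j}}$ up to a sign, so the $j$th term collapses to $f\bigl(x^{p^{j}}\bigr)-f\bigl(x^{p^{j+1}}\bigr)$. The resulting series telescopes, with $N$th partial sum equal to $f(x)-f\bigl(x^{p^{N+1}}\bigr)$.

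The one genuine point of care, and the only place a hypothesis is used, is passing to the limit. Since $p\in(0,1)$ and $x>1$, the exponents satisfy $p^{N+1}\to 0$, whence $x^{p^{N+1}}\to 1$ as $N\to\infty$; continuity of $f$ at $1$ then forces $f\bigl(x^{p^{N+1}}\bigr)\to f(1)$, so the telescoping partial sums converge to $f(x)-f(1)$, as claimed. I expect the main obstacle to be purely bookkeeping, namely keeping the reindexing $k=j+1$ consistent and making sure the boundary term $f\bigl(x^{p^{N+1}}\bigr)$ is handled by the continuity assumption rather than silently assumed; there is no analytic difficulty beyond this limit.
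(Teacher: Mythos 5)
Your proof is correct. Both halves are sound: the reindexing $k=j+1$ does make $g(x)-g(x^p)$ collapse to the single term $(x-x^p)f(x)$, and in the second identity the weights cancel exactly so the partial sums telescope to $f(x)-f\bigl(x^{p^{N+1}}\bigr)$, with continuity of $f$ at $1$ used exactly where it is needed, namely to evaluate $\lim_{N\to\infty}f\bigl(x^{p^{N+1}}\bigr)=f(1)$. Note that the paper itself states this lemma without proof, importing it from the reference of Neamaty and Tourani, so there is no in-paper argument to compare against; your telescoping computation is the standard proof of this fact, and it correctly identifies that the first identity needs no continuity hypothesis while the second one does.
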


\begin{lemma}
	If $x,p\in (0,1),$ then $D_{p}I_{p^{-}}f(x)=-f(x)$, and also if function $f$
	is continuous at $x=1$, then we have $I_{p^{-}}D_{p}f(x)=f(1)-f(x).$
\end{lemma}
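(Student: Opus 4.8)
The plan is to prove both identities by direct computation from the series definition of the operator $I_{p^{-}}$, in each case reducing the expression to a telescoping sum. Recall from Case 2 that $I_{p^{-}}f(x)=\sum_{j=0}^{\infty}(x^{p^{j+1}}-x^{p^{j}})f(x^{p^{j}})$ for $x\in(0,1)$.

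For the first identity I would set $F(x)=I_{p^{-}}f(x)$ and compute $F(x^{p})$ by substituting $x\mapsto x^{p}$ in each summand, using the identities $(x^{p})^{p^{j+1}}=x^{p^{j+2}}$ and $(x^{p})^{p^{j}}=x^{p^{j+1}}$. After reindexing with $k=j+1$, the series for $F(x^{p})$ becomes exactly the tail $\sum_{k=1}^{\infty}(x^{p^{k+1}}-x^{p^{k}})f(x^{p^{k}})$ of the series for $F(x)$, so the difference $F(x)-F(x^{p})$ collapses to the single $j=0$ term $(x^{p}-x)f(x)$. Dividing by $x^{p}-x$ and invoking the definition $D_{p}F(x)=\frac{F(x^{p})-F(x)}{x^{p}-x}$ then yields $D_{p}I_{p^{-}}f(x)=-f(x)$ immediately.

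For the second identity I would write $g=D_{p}f$ and insert the difference quotient $g(x^{p^{j}})=\frac{f(x^{p^{j+1}})-f(x^{p^{j}})}{x^{p^{j+1}}-x^{p^{j}}}$ into the series $I_{p^{-}}g(x)=\sum_{j=0}^{\infty}(x^{p^{j+1}}-x^{p^{j}})g(x^{p^{j}})$. The integral weight $(x^{p^{j+1}}-x^{p^{j}})$ cancels the denominator of the difference quotient, leaving the telescoping sum $\sum_{j=0}^{\infty}\big(f(x^{p^{j+1}})-f(x^{p^{j}})\big)$, whose $N$th partial sum equals $f(x^{p^{N+1}})-f(x)$.

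The main obstacle, and the only point at which a hypothesis is genuinely used, is passing to the limit in this boundary term. I would observe that for $x,p\in(0,1)$ one has $p^{N+1}\to 0$ as $N\to\infty$, hence $x^{p^{N+1}}\to 1$; the continuity of $f$ at $1$ then gives $f(x^{p^{N+1}})\to f(1)$, so the partial sums converge to $f(1)-f(x)$, which is the desired conclusion. The delicate part is precisely this interchange: one must argue that the value of the series is the limit of its partial sums and that continuity at $1$ is exactly what licenses replacing $\lim_{j}f(x^{p^{j}})$ by $f(1)$, since without that assumption the boundary contribution need not converge.
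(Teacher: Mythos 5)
Your proof is correct. The paper itself states this lemma without proof, quoting it from \cite{NeaMeh}, and your direct computation --- reindexing the series for $I_{p^{-}}f(x^{p})$ to isolate the $j=0$ term for the first identity, and telescoping plus continuity of $f$ at $1$ (via $x^{p^{N+1}}\to 1$) for the second --- is exactly the standard argument used there, with the one genuinely delicate point (passing to the limit in the boundary term $f(x^{p^{N+1}})$) correctly identified and handled.
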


\begin{lemma}
	If $x,p\in (0,1)$ and $I_{p}f(x)=\int \limits_{0}^{x}f(s)d_{p}s,$ then $%
	D_{p}I_{p}f(x)=f(x)$, and also if function $f$ is continuous at $x=0$, then
	we have $I_{p}D_{p}f(x)=f(x)-f(0).$
\end{lemma}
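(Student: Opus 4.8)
The plan is to verify the two identities directly from the Case 3 series definition of the integral, namely $I_{p}f(x)=\int_{0}^{x}f(s)d_{p}s=\sum_{j=0}^{\infty}(x^{p^{-j}}-x^{p^{-j-1}})f(x^{p^{-j-1}})$ for $0<x<1$, using only reindexing and telescoping. Throughout I would invoke the standing assumption that $f$ is $p$-integrable on the relevant interval, so that every series below converges and the term rearrangements are legitimate.

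For the first identity $D_{p}I_{p}f(x)=f(x)$, since $0<x,p<1$ and $x\neq 1$ I apply the definition of the $p$-derivative to write $D_{p}I_{p}f(x)=\frac{I_{p}f(x^{p})-I_{p}f(x)}{x^{p}-x}$. The key computation is to evaluate $I_{p}f(x^{p})$. Using the exponent identities $(x^{p})^{p^{-j}}=x^{p^{-(j-1)}}$ and $(x^{p})^{p^{-j-1}}=x^{p^{-j}}$, the series for $I_{p}f(x^{p})$ becomes $\sum_{j=0}^{\infty}(x^{p^{-(j-1)}}-x^{p^{-j}})f(x^{p^{-j}})$. I then split off the $j=0$ term, which is exactly $(x^{p}-x)f(x)$, and shift the index $m=j+1$ in $I_{p}f(x)$ to see that the remaining tail $\sum_{j\geq 1}$ coincides termwise with $I_{p}f(x)$. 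This yields $I_{p}f(x^{p})=(x^{p}-x)f(x)+I_{p}f(x)$, so that the factor $x^{p}-x$ cancels and $D_{p}I_{p}f(x)=f(x)$.

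For the second identity $I_{p}D_{p}f(x)=f(x)-f(0)$, I substitute $D_{p}f$ into the integral series. Because $(x^{p^{-j-1}})^{p}=x^{p^{-j}}$, the $p$-derivative evaluated at the node $x^{p^{-j-1}}$ equals $\frac{f(x^{p^{-j}})-f(x^{p^{-j-1}})}{x^{p^{-j}}-x^{p^{-j-1}}}$, and the integration weight $x^{p^{-j}}-x^{p^{-j-1}}$ cancels this denominator precisely. Hence the $j$th summand collapses to $f(x^{p^{-j}})-f(x^{p^{-j-1}})$, and the series telescopes: its $N$th partial sum is $f(x)-f(x^{p^{-N-1}})$.

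The main obstacle — and the only place the hypothesis enters — is the passage to the limit in this telescoped tail. Since $0<x<1$ and $0<p<1$, we have $p^{-N-1}\to+\infty$ and therefore $x^{p^{-N-1}}\to 0^{+}$ as $N\to\infty$; continuity of $f$ at $0$ then gives $f(x^{p^{-N-1}})\to f(0)$, so that $I_{p}D_{p}f(x)=f(x)-f(0)$. I expect no genuine difficulty beyond careful bookkeeping of the exponents $p^{\pm j}$, which is the most error-prone part of the argument.
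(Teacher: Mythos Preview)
Your argument is correct in both parts: the reindexing that gives $I_{p}f(x^{p})=(x^{p}-x)f(x)+I_{p}f(x)$ is right, and the telescoping for $I_{p}D_{p}f$ together with $x^{p^{-N-1}}\to 0^{+}$ and continuity at $0$ is exactly what is needed. Note, however, that the paper does not supply its own proof of this lemma; it is quoted from \cite{NeaMeh} as background in the preliminaries, so there is nothing in the paper to compare your approach against. Your direct computation from the Case~3 series is the natural proof and matches what one would expect in the cited source.
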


\begin{theorem}[\textbf{Fundamental theorem of }$p$\textbf{-calculus},\cite{NeaMeh}]
	Let $p\in (0,1)$. If $F(x)$ is an antiderivative of $f(x)$ and $F(x)$ is
	continuous at $x=0$ and $x=1$, then for every $0\leq a<b\leq \infty $, we
	have
	\begin{equation*}
	\int \limits_{a}^{b}f(x)d_{p}x=F(b)-F(a).
	\end{equation*}
\end{theorem}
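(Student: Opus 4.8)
The plan is to reduce the whole statement to the single identity $\int_0^x f(t)\,d_p t=F(x)-F(0)$ holding for every $x\in[0,\infty)$, and then to invoke the definitional decomposition $\int_a^b f\,d_p t=\int_0^b f\,d_p t-\int_0^a f\,d_p t$ furnished by the generalized Corollary. Subtracting two such identities makes the $F(0)$ terms cancel and yields $\int_a^b f\,d_p t=F(b)-F(a)$ directly, uniformly in the relative position of $a$ and $b$ with respect to $1$. Thus the entire argument concentrates on evaluating $\int_0^x(D_pF)\,d_p t$, since the hypothesis that $F$ is an antiderivative means precisely $f=D_pF$.

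First I would treat the regime $0\le x<1$: here $\int_0^x f\,d_p t=I_pD_pF(x)$, so the third lemma (applied to $F$ and using that $F$ is continuous at $x=0$) gives immediately $\int_0^x f\,d_p t=F(x)-F(0)$. Next I would pin down the boundary value $\int_0^1 f\,d_p t=F(1)-F(0)$, which is the crux. For any $0<x<1$ I would split $\int_0^1 f\,d_p t=\int_0^x f\,d_p t+\int_x^1 f\,d_p t$, evaluate the first summand by the third lemma as $F(x)-F(0)$, and evaluate the second by the second lemma, which gives $\int_x^1 D_pF\,d_p t=I_{p^-}D_pF(x)=F(1)-F(x)$ (again using continuity of $F$ at $x=1$); the $F(x)$ terms cancel and leave exactly $F(1)-F(0)$.

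Finally, for $x>1$ I would use the decomposition $\int_0^x f\,d_p t=\int_0^1 f\,d_p t+\int_1^x f\,d_p t$ supplied by the Corollary, substitute the boundary value just obtained for the first term, and apply the first lemma to the second, namely $\int_1^x D_pF\,d_p t=I_{p^+}D_pF(x)=F(x)-F(1)$ (using continuity of $F$ at $x=1$); the $F(1)$ terms cancel and again leave $F(x)-F(0)$. Combining the three regimes establishes $\int_0^x f\,d_p t=F(x)-F(0)$ on all of $[0,\infty)$, and the opening reduction then completes the proof, with the case $b=\infty$ recovered by passing to the limit in the relevant convergent series.

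The step I expect to be the main obstacle is the gluing across $x=1$: verifying the additivity identities $\int_0^1=\int_0^x+\int_x^1$ and $\int_0^x=\int_0^1+\int_1^x$ that stitch together the three separate integral definitions of Cases 1--3 and the Remark, whose defining series are supported on different geometric sequences of sample points and so do not telescope into one another by a mere rearrangement. This is exactly where the continuity of $F$ at the transition points $0$ and $1$ is indispensable, since it forces the one-sided evaluations arising from $I_p$, $I_{p^-}$, and $I_{p^+}$ to agree on the overlaps rather than only formally.
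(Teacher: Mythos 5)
Your overall strategy is the intended one: the paper itself does not reprove this theorem (it is quoted from \cite{NeaMeh}), but it places the three lemmas on $I_{p^{+}}$, $I_{p^{-}}$ and $I_{p}$ immediately before the statement precisely as its building blocks, and your reduction to the single identity $\int_{0}^{x}f(t)\,d_{p}t=F(x)-F(0)$, combined with the definitional decompositions $\int_{a}^{b}=\int_{0}^{b}-\int_{0}^{a}$ and, for $x>1$, $\int_{0}^{x}=\int_{0}^{1}+\int_{1}^{x}$ from the Corollary, is exactly how that route goes. The treatment of the regimes $0\le x<1$ (Lemma on $I_{p}$) and $x>1$ (Lemma on $I_{p^{+}}$) is correct.

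However, your evaluation of $\int_{0}^{1}f(t)\,d_{p}t$ contains a genuine gap. You split $\int_{0}^{1}=\int_{0}^{x}+\int_{x}^{1}$ at an \emph{arbitrary} $x\in(0,1)$, but this identity is not one of the definitional decompositions and is false for general $p$-integrable integrands: the left-hand side samples $f$ only on the orbit $\{p^{p^{j}}:j\in\mathbb{Z}\}$ (formula \eqref{01int}), while the right-hand side samples $f$ only on $\{x^{p^{k}}:k\in\mathbb{Z}\}$. For instance, taking $f$ to be the indicator function of the single point $p$, one gets $\int_{0}^{1}f\,d_{p}t=p^{p}-p\neq 0$, while $\int_{0}^{x}f\,d_{p}t=\int_{x}^{1}f\,d_{p}t=0$ whenever $p\notin\{x^{p^{k}}\}$. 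Your closing paragraph flags this mismatch but then appeals to continuity of $F$ ``forcing agreement on the overlaps,'' which is circular: you would be using the unproved additivity to compute $\int_{0}^{1}$, whereas the only way to check that additivity in this setting is to have computed all three integrals already. Two clean repairs: (i) bypass the splitting entirely and telescope the defining series \eqref{01int} directly --- since $(p^{p^{j}})^{p}=p^{p^{j+1}}$, each term $(p^{p^{j+1}}-p^{p^{j}})D_{p}F(p^{p^{j}})$ equals $F(p^{p^{j+1}})-F(p^{p^{j}})$, so the partial sums are $F(p^{p^{N+1}})-F(p^{p^{-M}})$, which tend to $F(1)-F(0)$ by continuity of $F$ at $1$ and at $0$; or (ii) keep your splitting but choose the split point in the orbit of $p$, say $x=p$, for which the Case 2 and Case 3 series are literally the two halves ($j\ge 0$ and $j<0$) of the series \eqref{01int}, so that additivity is a genuine rearrangement. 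With either repair, the rest of your argument goes through.
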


\begin{corollary}
	If $f(x)$ is continuous at $x=0$ and $x=1$, then we have
	\begin{equation*}
	\int \limits_{a}^{b}D_{p}f(x)d_{p}x=f(b)-f(a).
	\end{equation*}
\end{corollary}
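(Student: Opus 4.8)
The plan is to derive this directly from the Fundamental Theorem of $p$-calculus stated just above, by observing that $f$ is its own antiderivative in the $p$-sense. I would begin by setting $g(x):=D_{p}f(x)$ and recalling the Definition of a $p$-antiderivative: a function $F$ is a $p$-antiderivative of $g$ precisely when $D_{p}F=g$. Taking $F=f$, the identity $D_{p}f=g$ holds by the very definition of $g$, so $f$ is a $p$-antiderivative of $D_{p}f$.

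Next I would verify that the hypotheses of the Fundamental Theorem are satisfied for this choice of $F$. That theorem requires the antiderivative to be continuous at $x=0$ and $x=1$; since the antiderivative here is $f$ itself, the required continuity is exactly the hypothesis ``$f$ is continuous at $x=0$ and $x=1$'' assumed in the statement. Hence, for every $0\leq a<b\leq \infty$, the Fundamental Theorem applies verbatim with $F=f$.

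Finally, I would invoke the conclusion of the Fundamental Theorem, which yields
\[
\int \limits_{a}^{b}D_{p}f(x)\,d_{p}x=\int \limits_{a}^{b}g(x)\,d_{p}x=f(b)-f(a),
\]
completing the argument. I do not expect any genuine obstacle: the statement is a one-line specialization of the Fundamental Theorem, and the only point worth checking carefully is the bookkeeping that the continuity assumption placed on $f$ in this corollary is precisely the continuity assumption the Fundamental Theorem imposes on the antiderivative. Once that correspondence is noted, the result is immediate.
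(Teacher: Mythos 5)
Your proof is correct and is exactly the intended argument: the paper states this corollary without proof precisely because it is the immediate specialization of the Fundamental Theorem of $p$-calculus with $F=f$ as the $p$-antiderivative of $D_{p}f$. Your careful check that the continuity hypothesis on $f$ matches the continuity requirement on the antiderivative is the only point of substance, and you handled it correctly.
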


\begin{corollary}
	If $f(x)$ and $g(x)$ is continuous at $x=0$ and $x=1$, then we have%
	\begin{equation} \label{byparts}
	\int
	\limits_{a}^{b}f(x)D_{p}g(x)d_{p}x=f(b)g(b)-f(a)g(a)-\int%
	\limits_{a}^{b}g(x^{p})D_{p}f(x)d_{p}x.
	\end{equation}%
	This formula is called $p$-integration by parts.
\end{corollary}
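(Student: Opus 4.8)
The plan is to derive the formula directly from the $p$-Leibniz product rule (part 3 of the product-rule Theorem) together with the preceding Corollary, which states that $\int_{a}^{b}D_{p}h(x)\,d_{p}x = h(b)-h(a)$ whenever $h$ is continuous at $x=0$ and $x=1$.

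First I would apply the product rule to the function $x\mapsto f(x)g(x)$, obtaining
\[
D_{p}\big(f(x)g(x)\big)=g(x^{p})D_{p}f(x)+f(x)D_{p}g(x).
\]
Rearranging this to isolate the integrand of interest gives
\[
f(x)D_{p}g(x)=D_{p}\big(f(x)g(x)\big)-g(x^{p})D_{p}f(x).
\]

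Next I would $p$-integrate both sides over $[a,b]$ and invoke linearity of the $p$-integral, which is immediate from its defining series:
\[
\int_{a}^{b}f(x)D_{p}g(x)\,d_{p}x=\int_{a}^{b}D_{p}\big(f(x)g(x)\big)\,d_{p}x-\int_{a}^{b}g(x^{p})D_{p}f(x)\,d_{p}x.
\]
The first integral on the right is handled by the Corollary to the Fundamental Theorem: since $f$ and $g$ are each continuous at $x=0$ and $x=1$, their product $h=fg$ is as well, and therefore $\int_{a}^{b}D_{p}(fg)\,d_{p}x = f(b)g(b)-f(a)g(a)$. Substituting this evaluation yields exactly the claimed identity.

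The step requiring the most care is verifying that the hypotheses of the Corollary are genuinely met, namely that $fg$ is continuous at the two distinguished points $0$ and $1$ where the $p$-derivative and $p$-integral are defined by limits rather than by the raw difference quotient. This continuity is precisely what licenses the evaluation $\int_{a}^{b}D_{p}(fg)\,d_{p}x=f(b)g(b)-f(a)g(a)$; the remaining manipulations, the product rule and the linearity of the series, are purely algebraic and present no real obstacle.
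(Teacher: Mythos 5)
Your proposal is correct and is precisely the argument the paper intends: the result is stated as a corollary because it follows by rearranging the product rule $D_{p}(fg)(x)=g(x^{p})D_{p}f(x)+f(x)D_{p}g(x)$, integrating by linearity, and evaluating $\int_{a}^{b}D_{p}(fg)(x)\,d_{p}x=f(b)g(b)-f(a)g(a)$ via the preceding corollary to the Fundamental Theorem, with the observation that $fg$ inherits continuity at $0$ and $1$. The paper gives no explicit proof, so your write-up (including the check of the continuity hypothesis, which is the only nontrivial point) supplies exactly what is left implicit.
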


The  $p$-integral has the following properties:

\begin{theorem}
	Let $f,g:I\rightarrow
	%TCIMACRO{\U{211d} }%
	%BeginExpansion
	\mathbb{R}
	%EndExpansion
	$ be $p$-integrable on $J\,,a,b,c\in J$ and $\  \alpha ,\beta \in
	%TCIMACRO{\U{211d} }%
	%BeginExpansion
	\mathbb{R}
	%EndExpansion
	.$Then
	
	\begin{enumerate}
		\item $\int \limits_{a}^{a}f(t)d_{p}t=0;$
		
		\item $\int \limits_{a}^{b}f(t)d_{p}t=-\int \limits_{b}^{a}f(t)d_{p}t;$
		
		\item $\int \limits_{a}^{b}f(t)d_{p}t=\int \limits_{a}^{c}f(t)d_{p}t+\int
		\limits_{c}^{b}f(t)d_{p}t;$
		
		\item $\int \limits_{a}^{b}\left( \alpha f+\beta g\right) (t)d_{p}t=\alpha
		\int \limits_{a}^{b}f(t)d_{p}t+\beta \int \limits_{a}^{b}g(t)d_{p}t.$
	\end{enumerate}
\end{theorem}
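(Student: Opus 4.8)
The plan is to reduce all four identities to the single defining relation for the definite $p$-integral based at the endpoint $0$ (or $1$), namely $\int_a^b f\,d_p t = \int_0^b f\,d_p t - \int_0^a f\,d_p t$, furnished by the generalized Corollary (Cf. Corollary 4.12). Once every integral is rewritten in terms of the base-point integrals $\int_0^x$, properties (1)--(3) become purely formal, and only (4) forces a return to the underlying series.

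First I would settle (1), (2), and (3) at once. Abbreviating $I(x):=\int_0^x f\,d_p t$, the defining relation reads $\int_a^b f\,d_p t = I(b)-I(a)$. Then $\int_a^a f\,d_p t = I(a)-I(a)=0$ gives (1); $\int_b^a f\,d_p t = I(a)-I(b) = -\big(I(b)-I(a)\big)$ gives (2); and the telescoping identity $\big(I(c)-I(a)\big)+\big(I(b)-I(c)\big)=I(b)-I(a)$ gives (3). No convergence question arises beyond the standing hypothesis that $f$ is $p$-integrable, which is exactly what makes each $I(\cdot)$ a finite number.

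For (4) I would unwind the series definition. For $0\le x<1$ one has $I(x)=\sum_{j=0}^{\infty}\big(x^{p^{-j}}-x^{p^{-j-1}}\big)f\big(x^{p^{-j-1}}\big)$, where the weights and the sample points $x^{p^{-j-1}}$ are independent of the integrand. Hence, provided the series for $f$ and for $g$ both converge (again the $p$-integrability hypothesis), termwise linearity of convergent series yields $\int_0^x(\alpha f+\beta g)\,d_p t = \alpha\int_0^x f\,d_p t + \beta\int_0^x g\,d_p t$; combining this with the base-point relation of the previous step extends the identity to arbitrary $a,b$.

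The main obstacle is the bookkeeping forced by the case split in the definition of $\int_0^x$. For $x>1$ the base-point integral is itself a sum $\int_0^1 f\,d_p t+\int_1^x f\,d_p t$ of two differently indexed series, one doubly infinite and one one-sided, so I would check termwise linearity on each constituent series and then add. Likewise the reduction $\int_a^b=I(b)-I(a)$ must be applied in the appropriate regime: for $0<a<b<1$ and for $0\le a<1<b$ the natural base point is $0$, whereas for $1<a<b$ one uses $\int_a^b = \int_1^b-\int_1^a$ instead. Since every such reduction again has the telescoping shape $(\text{integral to }b)-(\text{integral to }a)$, the formal arguments for (1)--(3) and the series argument for (4) transfer without change, and the only genuine care needed is confirming convergence of each series, which $p$-integrability supplies.
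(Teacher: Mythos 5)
Your proposal is correct and matches the paper's approach: the paper disposes of this theorem with the single remark that it follows from the definitions of the definite $p$-integral, and your reduction to the base-point integrals $\int_0^x$ (respectively $\int_1^x$ in the regime $1<a<b$), with the telescoping argument for (1)--(3) and termwise linearity of the convergent series for (4), is exactly that omitted argument made explicit. No gap; if anything, your careful handling of the case split in the definition of $\int_0^x$ supplies detail the paper chose not to write down.
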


\begin{proof}
	By using definitions of definite $ p $-integral, the proof is clear.
\end{proof}

In what follows, for a given $t\in J$, we denote
\begin{equation*}
\lbrack t]_{p}:=\{t^{p^{j}}:j\in N_{0}\} \cup \{0,1\},
\end{equation*}%
and
\begin{equation*}
\lbrack a,b]_{p}:=[a]_{p}\cup \lbrack b]_{p}.
\end{equation*}

Because of different definitions of $ p $-integral depending on interval, from now on, we assume that  $0<a<1<b$ for $a,b\in J$.

\begin{proposition}
	Let $f$ and $g$ be $p$-integrable on $J$ and $a,b\in J$ such
	that $0<a<1<b$. If $|f(t)|\leq g(t)$ for all $t\in \lbrack a,b]_{p}$, then
	for $x,y\in \lbrack a,b]_{p}$, $x<1<y,$ we have
	
	\begin{enumerate}
		\item
		\begin{equation*}
		\left \vert \int \limits_{1}^{y}f(t)d_{p}t\right \vert \leq \int
		\limits_{1}^{y}g(t)d_{p}t,
		\end{equation*}
		
		\item
		\begin{equation*}
		\left \vert \int \limits_{1}^{x}f(t)d_{p}t\right \vert \leq -\int
		\limits_{1}^{x}g(t)d_{p}t.
		\end{equation*}
		
		\item
		\begin{equation*}
		\left \vert \int \limits_{x}^{y}f(t)d_{p}t\right \vert \leq \int
		\limits_{x}^{y}g(t)d_{p}t.
		\end{equation*}
		
		Consequently, if $g(t)\geq 0$ for all $t\in \lbrack a,b]_{p}$, then the
		inequalities $$\int \limits_{1}^{b}g(t)d_{p}t\geq 0 \text{ and} \int%
		\limits_{a}^{b}g(t)d_{p}t\geq 0$$ hold for all for $x,y\in \lbrack
		a,b]_{p}$.
	\end{enumerate}
\end{proposition}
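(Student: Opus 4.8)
The plan is to prove each inequality directly from the series definitions of the $p$-integral, using the triangle inequality for series termwise. The key observation is that in each of the three cases the summands of the defining series have a definite sign, so that taking absolute values inside the sum and bounding $|f|$ by $g$ termwise yields the claimed estimate. The conclusion about nonnegativity will then follow as a special case by setting $f \equiv 0$ in part (1) and combining parts via the additivity property (Theorem on $p$-integral properties, item 3).

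Let me set up each part. **For part (1),** with $y>1$ I would write $\int_1^y f(t)\,d_pt = \sum_{j=0}^\infty (y^{p^j}-y^{p^{j+1}})f(y^{p^j})$ from \eqref{case1defint}. Since $p\in(0,1)$ and $y>1$, the exponents satisfy $p^{j+1}<p^j$, so $y^{p^{j+1}}<y^{p^j}$, whence each weight $y^{p^j}-y^{p^{j+1}}$ is strictly positive. Applying the triangle inequality and then $|f(y^{p^j})|\le g(y^{p^j})$ gives
\begin{equation*}
\left|\int_1^y f(t)\,d_pt\right| \le \sum_{j=0}^\infty (y^{p^j}-y^{p^{j+1}})\,|f(y^{p^j})| \le \sum_{j=0}^\infty (y^{p^j}-y^{p^{j+1}})\,g(y^{p^j}) = \int_1^y g(t)\,d_pt.
\end{equation*}
**For part (2),** with $x<1$ the relevant series (Case 2) is $\int_x^1 f(t)\,d_pt = \sum_{j=0}^\infty (x^{p^{j+1}}-x^{p^j})f(x^{p^j})$, and here the weights $x^{p^{j+1}}-x^{p^j}$ are positive because $x<1$ reverses the inequality. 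Since $\int_1^x = -\int_x^1$, the sign flip accounts for the $-\int_1^x g$ on the right-hand side; the same termwise triangle-inequality argument then produces the bound.

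**For part (3),** I would invoke additivity across the point $1$: $\int_x^y = \int_x^1 + \int_1^y = \int_1^y - \int_1^x$. Applying the triangle inequality at the level of these two integrals and then parts (1) and (2) gives
\begin{equation*}
\left|\int_x^y f\,d_pt\right| \le \left|\int_1^y f\,d_pt\right| + \left|\int_1^x f\,d_pt\right| \le \int_1^y g\,d_pt + \left(-\int_1^x g\,d_pt\right) = \int_x^y g\,d_pt.
\end{equation*}
Finally, for the nonnegativity conclusion, taking $f\equiv 0$ (so $g(t)\ge 0$ suffices as the hypothesis $|f|\le g$) reduces parts (1) and (3) to $\int_1^b g\,d_pt\ge 0$ and $\int_a^b g\,d_pt\ge 0$.

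**The main obstacle** is less the algebra than the bookkeeping of signs: one must be careful that the weights in each series have the sign asserted, and that the identity $\int_1^x = -\int_x^1$ is applied consistently so that the negative sign in part (2) is genuinely correct rather than an artifact. A secondary subtlety is convergence: the termwise manipulations are valid because $f$ and $g$ are assumed $p$-integrable on $[a,b]$, so the defining series converge absolutely at the endpoints, and the intermediate series at $x,y\in[a,b]_p$ are dominated by those at $a,b$. I would note this domination explicitly to justify rearranging and bounding the series.
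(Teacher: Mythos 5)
Your proof is correct, and for parts (1) and (2) it is essentially identical to the paper's: the termwise triangle inequality applied to the defining series, after checking that the weights $y^{p^{j}}-y^{p^{j+1}}$ (for $y>1$) and $x^{p^{j+1}}-x^{p^{j}}$ (for $x<1$) are positive, with the identity $\int_{1}^{x}=-\int_{x}^{1}$ producing the minus sign in (2). The only divergence is part (3): the paper dismisses it with ``the proof is similar to the previous ones'' (suggesting another direct series computation), whereas you deduce it from (1) and (2) via additivity of the $p$-integral across $t=1$; this is legitimate within the paper's framework, since additivity is item (3) of its theorem on $p$-integral properties, and it avoids repeating the series bookkeeping. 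You also handle the final ``consequently'' claim explicitly (by taking $f\equiv 0$ in (1) and (3)), a step the paper states but never proves, and your remark on absolute convergence justifying the termwise estimates is a point the paper glosses over.
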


\begin{proof}
	\begin{enumerate}
		\item Since $y>1 $, then $y^{p^{j+1}}<y^{p^{j}},$ $j\in N_{0}$, $y\in
		\lbrack a,b]_{p},$ \
		\begin{eqnarray*}
			\left \vert \int \limits_{1}^{y}f(t)d_{p}t\right \vert &=&\left \vert \sum
			\limits_{j=0}^{\infty }(y^{p^{j}}-y^{p^{j+1}})f(y^{p^{j}})\right \vert \leq
			\sum \limits_{j=0}^{\infty }(y^{p^{j}}-y^{p^{j+1}})\left \vert
			f(y^{p^{j}})\right \vert \\
			&\leq &\sum \limits_{j=0}^{\infty }(y^{p^{j}}-y^{p^{j+1}})g(y^{p^{j}})=\int
			\limits_{1}^{y}g(t)d_{p}t.
		\end{eqnarray*}
		
		\item
		\begin{eqnarray*}
			\left \vert \int \limits_{x}^{1}f(t)d_{p}t\right \vert &=&\left \vert \sum
			\limits_{j=0}^{\infty }(x^{p^{j+1}}-x^{p^{j}})f(x^{p^{j}})\right \vert \leq
			\sum \limits_{j=0}^{\infty }(x^{p^{j}}-x^{p^{j+1}})\left \vert
			f(x^{p^{j}})\right \vert \\
			&\leq &-\sum \limits_{j=0}^{\infty
			}(x^{p^{j}}-x^{p^{j+1}})g(x^{p^{j}})=-\int \limits_{1}^{x}g(t)d_{p}t.
		\end{eqnarray*}
		
		\item The proof is similar previous ones.
	\end{enumerate}
\end{proof}

\section{Main Results}

The main objective of this section is to introduce the $p$-variational
calculus. For this purpose, we consider the following variational
problem:%

\begin{align}
\mathcal{L}[y]
&=\int \limits_{a}^{b}L\big(t,y(t^{p}),D_{p}[y](t)
\big)d_{p}t\longrightarrow extremize  \notag \\
&  \tag*{(P)}  \label{problem} \\
y &\in \mathcal{Y}^{{}}\big( [a, b ]_{p},%
%TCIMACRO{\U{211d} }%
%BeginExpansion
\mathbb{R}
%EndExpansion
\big),~y(a)=\alpha,~y(b)=\beta   \notag
\end{align}
where by extremize we mean minimize or maximize and $y\in \mathcal{Y}$, where%
\begin{equation*}
\mathcal{Y}:=\{y:I\rightarrow
%TCIMACRO{\U{211d} }%
%BeginExpansion
\mathbb{R}
%EndExpansion
\text{ }|\text{ }y\text{ and }D_{p}[y]\text{ are bounded on }[a , b
]_{p}\text{ and continuous at }0 \text{~and~} 1\}
\end{equation*}%
equipped with the norm
\begin{equation*}
||y||=\sup_{t\in \lbrack a , b ]_{p}}|y(t)|+\sup_{t\in \lbrack
	a , b ]_{p}}|D_{p}[y](t)|.
\end{equation*}

\begin{definition}
	We say that $y$ is an admissible function for problem \ref{problem} if $y\in \mathcal{%
		Y }^{}\left( [a , b]_{p},
	%TCIMACRO{\U{211d} }%
	%BeginExpansion
	\mathbb{R}
	%EndExpansion
	\right) $ and $y$ satisfies the boundary conditions $y(a)=\alpha,~y(b)=\beta$.
\end{definition}

\begin{definition}
	We say that $y_{\ast }$ is a local minimizer (resp. local maximizer) for
	problem \ref{problem} if $y_{\ast }$ is an admissible function and there exists $%
	\delta >0$ such that
	\begin{equation*}
	\mathcal{L} \lbrack y_{\ast }]\leq \mathcal{L} \lbrack y]\text{ \  \ }(\text{resp. }\mathcal{L} \lbrack y_{\ast }]\geq \mathcal{L} \lbrack y])
	\end{equation*}
	for all admissible $y$ with $||y_{\ast }-y||_{}<\delta $.
\end{definition}

\begin{definition}
	We say that $\eta \in \mathcal{Y}^{}\left( [a, b ]_{p},
	%TCIMACRO{\U{211d} }%
	%BeginExpansion
	\mathbb{R}
	%EndExpansion
	\right) $ is an admissible variation for problem \ref{problem} if $\eta (a)=0=\eta
	(b). $
\end{definition}

\subsection{Basic Lemmas\protect \bigskip}
In order to get our results, we need important lemma:
\begin{lemma}[\textbf{Fundamental Lemma of }$p$\textbf{-variational Calculus}]\label{fundlemmavar}
	Let $f\in \mathcal{Y}^{}([a,b]_{p},%
	%TCIMACRO{\U{211d} }%
	%BeginExpansion
	\mathbb{R}
	%EndExpansion
	)$. One has%
	\begin{equation*}
	\int \limits_{a}^{b}f(t)h(t^{p})d_{p}t=0
	\end{equation*}%
	for all functions $h\in \mathcal{Y}^{}$ with $h(a)=h(b)=0$ if and only if $%
	f(t)=0$ for all $t\in \lbrack a,b]_{p}.$
\end{lemma}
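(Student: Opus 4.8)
The plan is to prove the two implications separately, after rewriting the $p$-integral over $[a,b]$ (with $0<a<1<b$) as the sum of its two one-sided pieces. Using Case~1 and Case~2 together with the additivity $\int_a^b=\int_a^1+\int_1^b$, I would first record the explicit form
\begin{multline*}
\int_a^b f(t)h(t^p)\,d_pt
= \sum_{j=0}^{\infty} (a^{p^{j+1}}-a^{p^{j}})f(a^{p^{j}})h(a^{p^{j+1}}) \\
+ \sum_{j=0}^{\infty} (b^{p^{j}}-b^{p^{j+1}})f(b^{p^{j}})h(b^{p^{j+1}}),
\end{multline*}
in which every weight is nonzero and the arguments of $f$ and $h$ run through the two orbits $\{a^{p^{j}}\}$ and $\{b^{p^{j}}\}$ constituting $[a,b]_p$ (together with their common limit $1$). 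Two features matter for the argument: each summand samples $f$ and $h$ only at points of $[a,b]_p$, and the exponent shift pairs the value $f(c^{p^{j}})$ with $h(c^{p^{j+1}})$.

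The reverse implication is then immediate: if $f$ vanishes on $[a,b]_p$, every summand above is zero, so the integral vanishes for every admissible $h$.

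For the forward implication I would argue by contraposition. Suppose $f(t_0)\neq 0$ for some $t_0\in[a,b]_p$ with $t_0\neq 0,1$, so that $t_0=c^{p^{j_0}}$ with $c\in\{a,b\}$ and $j_0\geq 0$. The idea is to build a test function concentrated at the single point $t_0^{p}=c^{p^{j_0+1}}$: set $h(c^{p^{j_0+1}})=1$ and $h(t)=0$ for every other $t\in I$. Because the orbit points are strictly monotone in $j$, this location is distinct from all other orbit points, and since $c^{p^{j_0+1}}\notin\{a,b\}$ and $c^{p^{j_0+1}}\neq 0,1$ we get $h(a)=h(b)=0$. Feeding this $h$ into the double sum collapses it to the single surviving term $(a^{p^{j_0+1}}-a^{p^{j_0}})f(t_0)$ when $c=a$ (and the analogous $b$-term when $c=b$), which is nonzero and contradicts the hypothesis. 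The remaining fixed points are handled by continuity: since $f(c^{p^{j}})=0$ for all $j$ and $c^{p^{j}}\to 1$, continuity of $f$ at $1$ forces $f(1)=0$; the value at $0$, which the integral over $[a,b]$ with $a>0$ never samples, is subsumed under the continuity hypothesis built into $\mathcal{Y}$.

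The step demanding the most care is confirming that this spike $h$ really lies in $\mathcal{Y}$, i.e.\ that $h$ and $D_p[h]$ are bounded on $[a,b]_p$ and continuous at $0$ and $1$. Boundedness of $h$ is clear, and $D_p[h](x)=\big(h(x^p)-h(x)\big)/(x^p-x)$ is nonzero only for the two arguments $x=c^{p^{j_0}}$ and $x=c^{p^{j_0+1}}$, so it too is bounded. The decisive observation is that the spike sits at a fixed orbit point bounded away from both $0$ and $1$; consequently $h$ and $D_p[h]$ vanish on neighbourhoods of $0$ and $1$ and are trivially continuous there. Once this membership is verified, the single-term evaluation delivers the contradiction and completes the proof.
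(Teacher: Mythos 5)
Your handling of the orbit points and of $t=1$ is correct (the spike construction, the verification that the spike lies in $\mathcal{Y}$, and the continuity argument at $1$ all work), but there is a genuine gap at $t=0$, and within your setup it cannot be repaired. You claim the value $f(0)$ is ``subsumed under the continuity hypothesis built into $\mathcal{Y}$.'' It is not: with your decomposition, the integral samples $f$ only at the forward orbits $\{a^{p^{j}}\}_{j\ge 0}$ and $\{b^{p^{j}}\}_{j\ge 0}$, all of which lie in $[a,b]$ and accumulate at $1$, not at $0$. Continuity of $f$ at $0$ therefore gives no link between $f(0)$ and any sampled value, nor to any other point of $[a,b]_p$, since $0$ is isolated in that set. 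Concretely, take $f$ continuous on $I$ with $f\equiv 1$ on $[0,a/4]$ and $f\equiv 0$ on $[a/2,\infty)$: then $f$ and $D_p[f]$ vanish at every point of $[a,b]_p\setminus\{0\}$ and near $0$ and $1$, so $f\in\mathcal{Y}$; your expression for $\int_a^b f(t)h(t^p)\,d_pt$ is $0$ for every admissible $h$; yet $f(0)=1$. Since $0\in[a,b]_p$ by the paper's definition $[t]_p=\{t^{p^{j}}:j\in\mathbb{N}_0\}\cup\{0,1\}$, the ``only if'' direction is, under your reading of the integral, actually false at $t=0$, not merely unproven.

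The source of the problem is your opening step. You rewrite $\int_a^b$ as $\int_a^1+\int_1^b$ using the Case 1 and Case 2 sums and the additivity property asserted in Section 2, whereas the paper's definition of $\int_a^b$ for $0\le a<1<b$ is \eqref{key}, namely $\int_0^b-\int_0^a=\int_0^1+\int_1^b-\int_0^a$. These are genuinely different objects here, because Jackson-type sums taken over different orbits need not agree for functions that are merely bounded: the paper's formula samples $f$ at the backward orbit $a^{p^{-j-1}}\searrow 0$ and at the orbit $p^{p^{j}}$, which accumulates at $0$ as $j\to-\infty$. That is exactly the ingredient the paper's proof exploits in its case $q=0$: a spike of $h$ placed at a backward-orbit point near $0$, together with continuity of $f$ at $0$, forces a nonzero integral. (The paper's write-up of that case is itself garbled --- it asserts those points tend to $1$ and invokes continuity at $1$ --- but the mechanism it needs, sample points accumulating at $0$, exists only in its decomposition.) Your decomposition is in other respects the more coherent one, since its sample set is exactly $[a,b]_p\setminus\{0,1\}$, which is what makes your treatment of the orbit points and of $t=1$ clean (the paper, by contrast, works with backward-orbit points that do not even belong to $[a,b]_p$ as defined); but it discards precisely the part of the integral that controls $f(0)$, so the lemma as stated is out of reach of your argument.
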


\begin{proof}
	The implication "$\Leftarrow $" is obvious. Let us prove the implication "$%
	\Rightarrow $". Conversely, suppose that exists $q\in \lbrack
	a,b]_{p}$ such that $f(q)\neq 0$.
	
	\begin{enumerate}
		\item If $q\neq 0$, then $q=a^{p^{-j}}$ or $q=b^{p^{-j}}$ for some $j\in
		%TCIMACRO{\U{2115} }%
		%BeginExpansion
		\mathbb{N}
		%EndExpansion
		_{0}.$\newline
		
		\begin{enumerate}
			\item Suppose that $a\neq 0$ and $b\neq 0$. In this case we can assume,
			without loss of generality, that $q=b^{p^{j}}$. Define%
			\begin{equation*}
			h(t)=\left \{
			\begin{array}{cc}
			f(a^{p^{-j-1}}),\text{ } & t=a^{p^{-j}}, \\
			0, & \text{otherwise.}%
			\end{array}%
			\right.
			\end{equation*}%
			Then%
			\begin{align*}
			\int \limits_{a}^{b}f(t)h(t^{p})d_{p}t =&\int
			\limits_{0}^{b}f(t)h(t^{p})d_{p}t-\int \limits_{0}^{a}f(t)h(t^{p})d_{p}t \\
			=&\int
			\limits_{0}^{1}f(t)h(t^{p})d_{p}t+\int
			\limits_{1}^{b} f(t)h(t^{p})d_{p}t-\int \limits_{0}^{a}f(t)h(t^{p})d_{p}t \\
			=&\sum \limits_{j=-\infty
			}^{\infty }(p^{p^{j+1}}-p^{p^{j}})f(p^{p^{j}})h((p^{p^{j}})^p)+\sum \limits_{j=0}^{\infty
			}(b^{p^{-j}}-b^{p^{-j-1}})f(b^{p^{-j-1}})h(\left( b^{p}\right)
			^{p^{-j-1}})\\
			&-\sum \limits_{j=0}^{\infty
			}(a^{p^{-j}}-a^{p^{-j-1}})f(a^{p^{-j-1}})h(\left( a^{p}\right) ^{p^{-j-1}})
			\\
			=&-(a^{p^{-j}}-a^{p^{-j-1}})\left[ f(a^{p^{-j-1}})\right] ^{2}\neq 0,
			\end{align*}%
			which is a contradiction.
			
			\item Suppose that $a=0$ and $b\neq 0$, then $q=b^{p^{-j}}$for some $%
			j\in
			%TCIMACRO{\U{2115} }%
			%BeginExpansion
			\mathbb{N}
			%EndExpansion
			_{0}.$Define%
			\begin{equation*}
			h(t)=\left \{
			\begin{array}{cc}
			f(b^{p^{-j-1}}),\text{ } & t=b^{p^{-j}}, \\
			0, & \text{otherwise.}%
			\end{array}%
			\right.
			\end{equation*}%
			and as in the proof (a), we obtain a contradiction.
			
			\item The case $b=0$ and $a\neq 0$ is similar to the previous one.
		\end{enumerate}
		
		\item If $q=0$, without loss of generality, we can assume that $f(q)>0.$
		Since
		\begin{equation*}
		\lim_{j\rightarrow \infty }a^{p^{-j-1}}=\lim_{j\rightarrow \infty
		}b^{p^{-j-1}}=1
		\end{equation*}%
		and $f$ is continuous at $1$, we have%
		\begin{equation*}
		\lim_{j\rightarrow \infty }f(a^{p^{-j-1}})=\lim_{j\rightarrow \infty
		}f(b^{p^{-j-1}})=f(1).
		\end{equation*}%
		Therefore, there exists an order $j_{0}\in
		%TCIMACRO{\U{2115} }%
		%BeginExpansion
		\mathbb{N}
		%EndExpansion
		$ such that for all $j>j_{0}$ the inequalities%
		\begin{equation*}
		f(a^{p^{-j-1}})>1\text{ and }f(b^{p^{-j-1}})>1
		\end{equation*}%
		hold.
		
		\begin{enumerate}
			\item If $a,b\neq 0$, then for some $k>j_{0}$, we define
			\begin{equation*}
			h(t)=\left \{
			\begin{array}{cc}
			f(b^{p^{-j-1}}),\text{ } & \text{if }t=b^{p^{-j}}, \\
			f(a^{p^{-j-1}}), & \text{if }t=a^{p^{-j}}, \\
			0, & \text{otherwise.}%
			\end{array}%
			\right.
			\end{equation*}%
			Hence%
			\begin{equation*}
			\int%
			\limits_{a}^{b}f(t)h(t^{p})d_{p}t=(b^{p^{-j}}-b^{p^{-j-1}})f(b^{p^{-j-1}})-(a^{p^{-j}}-a^{p^{-j-1}})f(a^{p^{-j-1}})\neq 0.
			\end{equation*}
			
			\item If $a=0$, then we define
			\begin{equation*}
			h(t)=\left \{
			\begin{array}{cc}
			f(b^{p^{-j-1}}),\text{ } & \text{if }t=b^{p^{-j}}, \\
			0, & \text{otherwise.}%
			\end{array}%
			\right.
			\end{equation*}%
			Therefore
			\begin{equation*}
			\int \limits_{0}^{b}f(t)h(t^{p})d_{p}t=(b^{p^{-j}}-b^{p^{-j-1}})\left[
			f(b^{p^{-j-1}})\right] ^{2}\neq 0.
			\end{equation*}
			
			\item If $b=0$, this follows by the same method as in the previous case.
		\end{enumerate}
	\end{enumerate}
\end{proof}

\begin{definition}\label{uniformdefn}
	Let $\ s\in I$ and $g:I\times (-\theta ,\theta )\rightarrow
	%TCIMACRO{\U{211d} }%
	%BeginExpansion
	\mathbb{R}
	%EndExpansion
	.$ We say that $g(t,\cdot )$ is differentiable at $\theta _{0}$ uniformly in $%
	[s]_{p}$ if for every $\epsilon >0$ there exists $\delta >0$ such that
	\begin{equation*}
	0<|\theta -\theta _{0}|<\delta \Longrightarrow \left \vert \frac{g(t,\theta
		)-g(t,\theta _{0})}{\theta -\theta _{0}}-\partial _{2}g(t,\theta _{0})\right
	\vert <\epsilon
	\end{equation*}%
	for all $t\in \lbrack s]_{p},$ where $\partial _{2}g=\frac{\partial g}{%
		\partial \theta }.$
\end{definition}

\begin{lemma}[Cf. \cite{malinowska2010hahn}]\label{uniformlemma}
	Let $\ s\in I$ and \ assume that $g:I\times (-\theta ,\theta )\rightarrow
	%TCIMACRO{\U{211d} }%
	%BeginExpansion
	\mathbb{R}
	%EndExpansion
	$ is differentiable at $\theta _{0}$ uniformly in $[s]_{p}.$ If $\int
	\limits_{0}^{s}g(t,\theta _{0})d_{p}t$ exists, then $G(\theta ):=\int
	\limits_{0}^{s}g(t,\theta )d_{p}t$ \ for $\theta $ near $\theta _{0}$ is
	differentiable at $\theta _{0}$ and%
	\begin{equation*}
	G^{\prime }(\theta _{0})=\int \limits_{0}^{s}\partial _{2}g(t,\theta
	_{0})d_{p}t.
	\end{equation*}
\end{lemma}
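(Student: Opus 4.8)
The plan is to attack the difference quotient of $G$ at $\theta_0$ directly and to transfer the limit inside the $p$-integral by means of the uniform hypothesis. First I would use linearity of the $p$-integral to write, for every $\theta$ near $\theta_0$ with $\theta\neq\theta_0$,
\begin{equation*}
\frac{G(\theta)-G(\theta_0)}{\theta-\theta_0}-\int\limits_0^s\partial_2 g(t,\theta_0)\,d_pt=\int\limits_0^s\left[\frac{g(t,\theta)-g(t,\theta_0)}{\theta-\theta_0}-\partial_2 g(t,\theta_0)\right]d_pt.
\end{equation*}
This single identity reduces the whole lemma to showing that the right-hand $p$-integral vanishes as $\theta\to\theta_0$, so that the left-hand side, which is precisely $\big(G(\theta)-G(\theta_0)\big)/(\theta-\theta_0)$ minus the candidate limit, goes to $0$.

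Next, fixing $\epsilon>0$, I would invoke the definition of uniform differentiability at $\theta_0$ to obtain $\delta>0$ such that $0<|\theta-\theta_0|<\delta$ forces
\begin{equation*}
\left|\frac{g(t,\theta)-g(t,\theta_0)}{\theta-\theta_0}-\partial_2 g(t,\theta_0)\right|<\epsilon\qquad\text{simultaneously for all }t\in[s]_p.
\end{equation*}
Treating the constant $\epsilon$ as a dominating function and applying the comparison estimate of the preceding Proposition (split as $\int_0^s=\int_0^1+\int_1^s$ when $s>1$) gives
\begin{equation*}
\left|\int\limits_0^s\left[\frac{g(t,\theta)-g(t,\theta_0)}{\theta-\theta_0}-\partial_2 g(t,\theta_0)\right]d_pt\right|\leq\epsilon\int\limits_0^s d_pt=\epsilon\,s,
\end{equation*}
where $\int_0^s d_pt=s$ is read off from the Fundamental Theorem of $p$-calculus with antiderivative $t\mapsto t$. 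Since $\epsilon>0$ is arbitrary, letting $\theta\to\theta_0$ yields differentiability of $G$ at $\theta_0$ together with the stated formula.

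Before running that estimate I would settle the implicit well-posedness: that $G(\theta)$ is defined for $\theta$ near $\theta_0$ and that $\int_0^s\partial_2 g(t,\theta_0)\,d_pt$ converges. Both follow by writing $g(t,\theta)=g(t,\theta_0)+(\theta-\theta_0)\big[\partial_2 g(t,\theta_0)+\rho(t,\theta)\big]$ with $|\rho(t,\theta)|<\epsilon$ uniform in $t$, and comparing the resulting series termwise against the convergent series defining $\int_0^s g(t,\theta_0)\,d_pt$. I expect the genuine difficulty to lie exactly in this convergence bookkeeping rather than in the estimate itself: the $p$-integral over $[0,s]$ is an infinite series whose nodes form geometric families accumulating at $0$ and at $1$, so the comparison step must be verified termwise on each family, and it is the uniformity of $\delta$ in $t$ that makes the single bound $\epsilon$ legitimate across all infinitely many nodes at once.
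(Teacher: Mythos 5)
Your core estimate is exactly the paper's proof: the paper likewise bounds the quantity
$\bigl|\frac{G(\theta)-G(\theta_0)}{\theta-\theta_0}-\int_0^s\partial_2 g(t,\theta_0)\,d_pt\bigr|$
by $\int_0^s\bigl|\frac{g(t,\theta)-g(t,\theta_0)}{\theta-\theta_0}-\partial_2 g(t,\theta_0)\bigr|\,d_pt$ and then invokes uniform differentiability; the only cosmetic difference is that the paper splits into the cases $s<1$ and $s>1$ and pre-scales $\epsilon$ (to $\epsilon/(2s)$, resp.\ $\epsilon/(2(s-1))$), whereas you use the single bound $\epsilon\int_0^s d_pt=\epsilon s$ and let $\epsilon$ be arbitrary. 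Your version is in fact tidier, since $\int_0^s d_pt=s$ in both cases (the paper's $s>1$ scaling actually yields $\epsilon s/(2(s-1))$ rather than $\epsilon/2$, a harmless slip). Up to this point there is no objection.

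The step that fails is your well-posedness paragraph. The decomposition $g(t,\theta)=g(t,\theta_0)+(\theta-\theta_0)\bigl[\partial_2 g(t,\theta_0)+\rho(t,\theta)\bigr]$ cannot deliver what you claim, because the argument is circular: writing the $p$-integral as a series $\sum_j w_j(\cdot)$ over the nodes, the $\rho$-part converges absolutely (since $|\rho|<\epsilon$ and the positive weights sum to $s$), so the identity makes convergence of $\sum_j w_j\,g(t_j,\theta)$ \emph{equivalent} to convergence of $\sum_j w_j\,\partial_2 g(t_j,\theta_0)$ --- but neither of the two follows from convergence of $\sum_j w_j\,g(t_j,\theta_0)$, which is all the lemma gives you. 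And no argument can close this gap, because the claim is false under the stated hypotheses: take $\theta_0=0$ and $g(t,\theta)=\theta h(t)$ with $h(t)=1/t$ for $t\neq 0$, $h(0):=0$. Then $g$ is differentiable at $0$ uniformly in $[s]_p$ (the difference-quotient error is identically zero) and $g(\cdot,0)\equiv 0$ is $p$-integrable, yet
\begin{equation*}
\int\limits_0^s h(t)\,d_pt=\sum\limits_{j=0}^{\infty}\bigl(s^{p^{-j}}-s^{p^{-j-1}}\bigr)\,s^{-p^{-j-1}}\qquad(0<s<1)
\end{equation*}
diverges --- its terms tend to infinity, the nodes accumulating at $0$ where $h$ blows up --- so $G(\theta)$ does not exist for any $\theta\neq 0$. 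The conclusion to draw is that existence of $G(\theta)$ for $\theta$ near $\theta_0$ and existence of $\int_0^s\partial_2 g(t,\theta_0)\,d_pt$ are genuinely additional hypotheses, not consequences of the stated ones: this is precisely why conditions 2 and 3 of Lemma \ref{lemadm} assume them, and the paper's own proof of Lemma \ref{uniformlemma} uses them silently. Your instinct that the convergence bookkeeping is where the difficulty lies was exactly right; the correct resolution is to assume it, as the cited Hahn-calculus source does, not to derive it.
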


\begin{proof}
	
	\begin{enumerate}
		\item[i.)]	Let $ s<1 $ be. Since $g(t,\cdot)$ is differentiable at $\theta _{0}$ uniformly in $[s]_{p}$,
		then for every $\epsilon >0$ there exists $\delta >0$ such that for
		all $t\in $ $[s]_{p}$ and for $0<|\theta -\theta _{0}|<\delta $, the
		following inequalities hold:%
		\begin{equation*}
		\left \vert \frac{g(t,\theta )-g(t,\theta _{0})}{\theta -\theta _{0}}%
		-\partial _{2}g(t,\theta _{0})\right \vert <\frac{\epsilon }{2s},
		\end{equation*}%
		\begin{eqnarray*}
			\left \vert \frac{G(\theta )-G(\theta _{0})}{\theta -\theta _{0}}-G^{\prime
			}(\theta _{0})\right \vert &\leq &\int \limits_{0}^{s}\left \vert \frac{%
				g(t,\theta )-g(t,\theta _{0})}{\theta -\theta _{0}}-\partial _{2}g(t,\theta
			_{0})\right \vert d_{p}t \\
			&<&\int \limits_{0}^{s}\frac{\epsilon }{2s}d_{p}t=\frac{\epsilon }{2}%
			<\epsilon .
		\end{eqnarray*}
		\item[ii.)]	
		Let $ s>1 $ be. Since $g(t,\cdot)$ is differentiable at $\theta _{0}$ uniformly in $[s]_{p}$,
		then for every $\epsilon >0$ there exists $\delta >0$ such that for
		all $t\in $ $[s]_{p}$ and for $0<|\theta -\theta _{0}|<\delta $, the
		following inequalities hold:%
		\begin{equation*}
		\left \vert \frac{g(t,\theta )-g(t,\theta _{0})}{\theta -\theta _{0}}%
		-\partial _{2}g(t,\theta _{0})\right \vert <\frac{\epsilon }{2(s-1)},
		\end{equation*}%
		\begin{eqnarray*}
			\left \vert \frac{G(\theta )-G(\theta _{0})}{\theta -\theta _{0}}-G^{\prime
			}(\theta _{0})\right \vert &\leq &\int \limits_{0}^{s}\left \vert \frac{%
				g(t,\theta )-g(t,\theta _{0})}{\theta -\theta _{0}}-\partial _{2}g(t,\theta
			_{0})\right \vert d_{p}t \\
			&<&\int \limits_{0}^{s}\frac{\epsilon }{2(s-1)}d_{p}t=\frac{\epsilon }{2}%
			<\epsilon .
		\end{eqnarray*}
	\end{enumerate}
	
	Hence, $G(\cdot)$ is differentiable at $\theta _{0}$ and $G^{\prime }(\theta
	_{0})=\int \limits_{0}^{s}\partial _{2}g(t,\theta _{0})d_{p}t.$
\end{proof}

\subsection{$p$-variational Problem}
For an admissible variation $\eta $ and an admissible function $y$, \ we
define the real function $\phi $ by%
\begin{equation*}
\phi (\varepsilon )=\phi (\varepsilon ,y,\eta ):=\mathcal{L} \lbrack
y+\varepsilon \eta ].
\end{equation*}%
The first variation of the functional \bigskip $\mathcal{L} $ of the problem
\ref{problem} is defined by%
\begin{equation*}
\delta \mathcal{L} \lbrack y,\eta ]:=\phi ^{\prime }(0).
\end{equation*}%
Note that
\begin{align*}
\mathcal{L} \lbrack y+\varepsilon \eta ] =&\int
\limits_{a}^{b}L\bigg(t,y(t^{p})+\varepsilon \eta (t^{p}),D_{p}[y](t)+\varepsilon
D_{p}[\eta ](t)\bigg)d_{p}t \\
=&\mathcal{L} _{b}[y+\varepsilon \eta ]-\mathcal{L} _{a}[y+\varepsilon \eta
]
\end{align*}%
where
\begin{equation*}
\mathcal{L} _{\xi }[y+\varepsilon \eta ]=\int \limits_{0}^{\xi
}L\bigg(t,y(t^{p})+\varepsilon \eta (t^{p}),D_{p}[y](t)+\varepsilon D_{p}[\eta
]\bigg)(t)d_{p}t
\end{equation*}%
with $\xi \in \{a,b\}$. Therefore,%
\begin{equation*}
\delta \mathcal{L} \lbrack y,\eta ]=\delta \mathcal{L} _{b}[y,\eta ]-\delta
\mathcal{L} _{a}[y,\eta ].
\end{equation*}

The following lemma is direct consequence of Lemma \ref{uniformlemma}. In what follows $%
\partial _{i}L$ denotes the partial derivative of $L$ with respect to its $i$%
th argument.

\begin{lemma}\label{lemadm}
	For an admissible variation $\eta $ and an admissible function $y.$ Let
	\begin{equation*}
	g(t,\varepsilon ):=L\bigg(t,y(t^{p})+\varepsilon \eta
	(t^{p}),D_{p}[y](t)+\varepsilon D_{p}[\eta ](t)\bigg).
	\end{equation*}%
	Assume that
	
	\begin{enumerate}
		\item $g(t,\cdot)$ is differentiable at $0$ uniformly in $[a , b ]_{p};$
		
		\item $\mathcal{L} _{a}[y+\varepsilon \eta ]=\int
		\limits_{0}^{a}g(t,\varepsilon )d_{p}t$ and $\mathcal{L} _{b}[y+\varepsilon
		\eta ]=\int \limits_{0}^{b}g(t,\varepsilon )d_{p}t$ exist for $\varepsilon
		\approx 0;$
		
		\item $\int \limits_{0}^{a}\partial _{2}g(t,0)d_{p}t$ and $\int
		\limits_{0}^{b}\partial _{2}g(t,0)d_{p}t$ exists.
	\end{enumerate}
	
	Then
	\begin{align*}
	\phi ^{\prime }(0)=&\delta \mathcal{L} \lbrack y,\eta ]\\
	=&\int
	\limits_{a}^{b}\Big( \partial _{2}L\big(t,y(t^{p}),D_{p}[y](t)\big)\eta
	(t^{q})+\partial _{3}L\big(t,y(t^{p}),D_{p}[y](t)\big)D_{p}\eta (t^{{}})\Big)
	d_{p}t.
	\end{align*}
\end{lemma}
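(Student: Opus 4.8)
The plan is to reduce everything to a double application of Lemma~\ref{uniformlemma}, once for the endpoint $a$ and once for the endpoint $b$. First I would recall from the decomposition established just before the statement that $\phi(\varepsilon)=\mathcal{L}[y+\varepsilon\eta]=\mathcal{L}_b[y+\varepsilon\eta]-\mathcal{L}_a[y+\varepsilon\eta]$, and I would set $G_\xi(\varepsilon):=\int_0^\xi g(t,\varepsilon)\,d_pt$ for $\xi\in\{a,b\}$, so that $\phi=G_b-G_a$. The task then becomes to differentiate each $G_\xi$ at $\varepsilon=0$ and subtract.

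Next I would verify that the three hypotheses of the present lemma are precisely what Lemma~\ref{uniformlemma} requires, applied with second variable $\theta=\varepsilon$ and base point $\theta_0=0$, separately for $s=a$ and $s=b$. The key observation is that $[a]_p$ and $[b]_p$ are both subsets of $[a,b]_p=[a]_p\cup[b]_p$, so the uniform differentiability of $g(t,\cdot)$ at $0$ over $[a,b]_p$ assumed in (1) restricts immediately to uniform differentiability over $[a]_p$ and over $[b]_p$ in the sense of Definition~\ref{uniformdefn}. Hypothesis (2) supplies the existence of $G_a(\varepsilon)$ and $G_b(\varepsilon)$ for $\varepsilon$ near $0$, and hypothesis (3) supplies the existence of $\int_0^a\partial_2 g(t,0)\,d_pt$ and $\int_0^b\partial_2 g(t,0)\,d_pt$. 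Lemma~\ref{uniformlemma} then gives $G_\xi'(0)=\int_0^\xi\partial_2 g(t,0)\,d_pt$, whence, by additivity of the $p$-integral (item (3) of the $p$-integral properties theorem),
\begin{equation*}
\phi'(0)=G_b'(0)-G_a'(0)=\int_0^b\partial_2 g(t,0)\,d_pt-\int_0^a\partial_2 g(t,0)\,d_pt=\int_a^b\partial_2 g(t,0)\,d_pt.
\end{equation*}

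The remaining step, where I would be most careful about notation, is to identify $\partial_2 g(t,0)$; here $\partial_2$ denotes the derivative of $g$ with respect to its \emph{second slot} $\varepsilon$, which must not be confused with the $\partial_2 L,\partial_3 L$ denoting partial derivatives of $L$ in its own arguments. Writing $g(t,\varepsilon)=L\big(t,\,u_\varepsilon,\,v_\varepsilon\big)$ with $u_\varepsilon=y(t^p)+\varepsilon\eta(t^p)$ and $v_\varepsilon=D_p[y](t)+\varepsilon D_p[\eta](t)$, and using that (H1) makes $(u,v)\mapsto L(t,u,v)$ a $C^1$ map, the ordinary chain rule gives $\partial_2 g(t,\varepsilon)=\partial_2 L\big(t,u_\varepsilon,v_\varepsilon\big)\,\eta(t^p)+\partial_3 L\big(t,u_\varepsilon,v_\varepsilon\big)\,D_p[\eta](t)$, where I note that $\eta(t^p)$ and $D_p[\eta](t)$ carry no $\varepsilon$-dependence and that the linearity of $D_p$ (item (2) of the $p$-derivative properties theorem) is what justified writing the third slot of $g$ as $D_p[y](t)+\varepsilon D_p[\eta](t)$ to begin with. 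Setting $\varepsilon=0$ yields $\partial_2 g(t,0)=\partial_2 L\big(t,y(t^p),D_p[y](t)\big)\eta(t^p)+\partial_3 L\big(t,y(t^p),D_p[y](t)\big)D_p[\eta](t)$, and substituting this into the displayed formula for $\phi'(0)$ produces exactly the claimed expression for $\delta\mathcal{L}[y,\eta]$. I do not anticipate a genuine obstacle: the only real subtlety is keeping the two meanings of $\partial_2$ distinct and confirming that the hypotheses of Lemma~\ref{uniformlemma} transfer cleanly to each endpoint.
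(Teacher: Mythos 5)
Your proposal is correct and follows exactly the route the paper intends: the paper offers no written proof, stating only that the lemma is a ``direct consequence of Lemma~\ref{uniformlemma}'' together with the decomposition $\delta\mathcal{L}[y,\eta]=\delta\mathcal{L}_b[y,\eta]-\delta\mathcal{L}_a[y,\eta]$ established just before the statement, and your argument fills in precisely those details (restriction of uniform differentiability to $[a]_p$ and $[b]_p$, two applications of Lemma~\ref{uniformlemma} at $s=a$ and $s=b$, and the chain rule via (H1)). Your computation also confirms that the $\eta(t^{q})$ in the paper's statement is a typo for $\eta(t^{p})$.
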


\subsection{Optimality conditions}

In this section, we will present a necessary condition (the $p$-Euler-Lagrange equation) and a sufficient condition to our Problem \ref{problem}.

\begin{theorem}[The $p$-Euler-Lagrange equation] \label{theopeulerlagrange}
	Under hypotheses (H1)-(H3) and conditions 1-3 of Lemma \ref{lemadm} on the Lagrangian $%
	L $, if $y_{\ast }\in \mathcal{Y}^{}$ is a local extremizer for problem
	\ref{problem}, then $y_{\ast }$ satisfies the $p$-Euler-Lagrange equation%
	\begin{equation}\label{peulerlagrange}
	\partial _{2}L\big(t,y(t^{p}),D_{p}[y](t)\big)=D_{p}\big[ \partial _{3}L(\cdot
	,y(\cdot ^{p}),D_{p}[y](\cdot ))\big] (t)
	\end{equation}%
	for all $t\in \lbrack a, b]_{p}.$
\end{theorem}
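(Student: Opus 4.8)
The plan is to follow the standard calculus-of-variations recipe, adapted to the $p$-calculus setting. First I would invoke the fact that $y_{\ast}$ is a local extremizer to deduce that $\varepsilon = 0$ is an interior extremum of the real function $\phi(\varepsilon) = \mathcal{L}[y_{\ast} + \varepsilon \eta]$ for every admissible variation $\eta$. Since $\phi$ is differentiable at $0$ (by the hypotheses borrowed from Lemma \ref{lemadm}), Fermat's theorem for interior extrema of real functions gives the necessary condition $\phi'(0) = 0$, i.e. $\delta \mathcal{L}[y_{\ast}, \eta] = 0$ for all admissible $\eta$.

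Next I would substitute the explicit formula for the first variation supplied by Lemma \ref{lemadm}, yielding
\begin{equation*}
\int \limits_{a}^{b}\Big( \partial _{2}L\big(t,y_{\ast}(t^{p}),D_{p}[y_{\ast}](t)\big)\eta (t^{p})+\partial _{3}L\big(t,y_{\ast}(t^{p}),D_{p}[y_{\ast}](t)\big)D_{p}[\eta ](t)\Big) d_{p}t = 0
\end{equation*}
for every admissible $\eta$. The goal is to collect the $\eta(t^{p})$ terms so that the Fundamental Lemma \ref{fundlemmavar} can be applied. The key step is to rewrite the second summand, which contains $D_{p}[\eta](t)$ rather than $\eta(t^{p})$, using the $p$-integration by parts formula \eqref{byparts}. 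Applying that formula with $f(t) = \partial_{3}L(t,y_{\ast}(t^{p}),D_{p}[y_{\ast}](t))$ and $g = \eta$ produces a boundary term $f(b)\eta(b) - f(a)\eta(a)$ together with the integral $-\int_{a}^{b} \eta(t^{p}) D_{p}\big[\partial_{3}L(\cdot, y_{\ast}(\cdot^{p}), D_{p}[y_{\ast}](\cdot))\big](t)\, d_{p}t$. Because $\eta$ is an admissible variation, $\eta(a) = \eta(b) = 0$, so the boundary term vanishes.

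After this substitution the two integrals combine into a single one, and the condition becomes
\begin{equation*}
\int \limits_{a}^{b}\Big( \partial _{2}L\big(t,y_{\ast}(t^{p}),D_{p}[y_{\ast}](t)\big) - D_{p}\big[\partial_{3}L(\cdot, y_{\ast}(\cdot^{p}), D_{p}[y_{\ast}](\cdot))\big](t)\Big)\eta(t^{p})\, d_{p}t = 0
\end{equation*}
for all admissible variations $\eta$. At this point the bracketed expression plays the role of $f$ in the Fundamental Lemma \ref{fundlemmavar}: it lies in $\mathcal{Y}$ by hypothesis (H3) together with the properties of $D_{p}$, and $\eta$ ranges over all of $\mathcal{Y}$ with $\eta(a)=\eta(b)=0$. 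Invoking Lemma \ref{fundlemmavar} forces the bracket to vanish identically on $[a,b]_{p}$, which is exactly the $p$-Euler--Lagrange equation \eqref{peulerlagrange}.

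I expect the main obstacle to be the careful justification that integration by parts applies here, namely verifying the continuity hypotheses at $x=0$ and $x=1$ required by formula \eqref{byparts} for the product $\partial_{3}L(\cdot,\ldots)\,\eta(\cdot)$, and confirming that the resulting function $D_{p}\big[\partial_{3}L(\cdot,y_{\ast}(\cdot^{p}),D_{p}[y_{\ast}](\cdot))\big]$ genuinely belongs to $\mathcal{Y}$ so that the Fundamental Lemma is applicable. These regularity checks are precisely what hypotheses (H1)--(H3) are designed to supply, so the structural argument is routine once they are in place; the delicate point is matching each hypothesis to the step that consumes it.
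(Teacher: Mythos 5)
Your proposal is correct and follows essentially the same route as the paper's own proof: the necessary condition $\phi'(0)=0$, the first-variation formula of Lemma \ref{lemadm}, $p$-integration by parts \eqref{byparts} with the boundary term killed by $\eta(a)=\eta(b)=0$, and finally the Fundamental Lemma \ref{fundlemmavar}. The regularity checks you flag at the end (that (H3) licenses both the integration by parts and the applicability of Lemma \ref{fundlemmavar}) are exactly the points the paper passes over silently, so your version is, if anything, slightly more careful.
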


\begin{proof}
	Let $y_{\ast }$ be a local minimizer (resp. maximizer) for problem \ref{problem} and $%
	\eta $ an admissible variation. Define $\phi :%
	%TCIMACRO{\U{211d} }%
	%BeginExpansion
	\mathbb{R}
	%EndExpansion
	\rightarrow
	%TCIMACRO{\U{211d} }%
	%BeginExpansion
	\mathbb{R}
	%EndExpansion
	$ by
	\begin{equation*}
	\phi (\varepsilon ):=\mathcal{L} \lbrack y_{\ast }+\varepsilon \eta ].
	\end{equation*}%
	A necessary condition for $y_{\ast }$ to be an extremizer is given by $\phi
	^{\prime }(0)=0.$ By Lemma \ref{lemadm}, we conclude that
	\begin{equation*}
	\int \limits_{a}^{b}\Big( \partial _{2}L\big(t,y(t^{p}),D_{p}[y](t)\big)\eta
	(t^{q})+\partial _{3}L\big(t,y(t^{p}),D_{p}[y](t)\big)D_{p}\eta (t)\Big)
	d_{p}t=0
	\end{equation*}%
	By integration by parts \eqref{byparts}, we get%
	\begin{eqnarray*}
		&&\int \limits_{a}^{b}\partial _{3}L\big(t,y(t^{p}),D_{p}[y](t)\big)D_{p}\eta
		(t^{{}})d_{p}t \\
		&=&\partial _{3}L\big(t,y(t^{p}),D_{p}[y](t)\big)\eta (t^{{}})|_{a}^{b}-\int
		\limits_{a}^{b}D_{p}\partial _{3}L\big(\cdot ,y(\cdot ^{p}),D_{p}[y](\cdot
		)\big)(t)\eta (t^{p})d_{p}t.
	\end{eqnarray*}%
	Since $\eta (a)=\eta (b)=0$, then%
	\begin{equation*}
	\int \limits_{a}^{b}\bigg( \partial _{2}L\big(t,y(t^{p}),D_{p}[y](t)\big)\eta
	(t^{p})-D_{p}\partial _{3}L\big(\cdot ,y(\cdot ^{p}),D_{p}[y](\cdot )\big)(t)\eta
	(t^{p})\bigg) d_{p}t=0
	\end{equation*}%
	Finally, by Lemma \ref{fundlemmavar}, for all $t\in \lbrack a, b ]_{p}$%
	\begin{equation*}
	\partial _{2}L\bigg(t,y(t^{p}),D_{p}[y](t)\bigg)=D_{p}\partial _{3}L\bigg(\cdot ,y(\cdot
	^{p}),D_{p}[y](\cdot )\bigg)(t).
	\end{equation*}
\end{proof}

To conclude this section, we prove a sufficient optimality condition for the
Problem \ref{problem}.

\begin{definition}
	Given a function $L$,we say that $L(t,u,v)$ is jointly convex(resp. concave)
	in $(u,v)$ iff $\partial _{i}L$, $i=2,3$, exist and continuous and verify the
	following conditions:%
	\begin{equation*}
	L(t,u+u_{1},v+v_{1})-L(t,u,v) ~\geq{(\text {resp.~}\leq )}~\partial
	_{2}L(t,u,v)u_{1}+\partial _{3}L(t,u,v)v_{1}
	\end{equation*}%
	for all $(t,u,v),(t,u+u_{1},v+v_{1})\in I\times
	%TCIMACRO{\U{211d} }%
	%BeginExpansion
	\mathbb{R}
	%EndExpansion
	^{2}.$
\end{definition}

\begin{theorem}\label{convextheo}
	Suppose that $a<b$ and $a,b\in \lbrack c]_{p}$ for some $c\in I$. Also
	assume that $L$ is a jointly convex(resp. concave) function in $(u,v).$ If $%
	y_{\ast }$ satisfies the $p$-Euler-Lagrange equation \eqref{peulerlagrange}, then $y_{\ast }$ is
	global minimizer (resp. maximizer) to the problem \ref{problem}.
\end{theorem}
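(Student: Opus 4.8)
The plan is to treat the convex case, the concave case being symmetric with all inequalities reversed. The idea is the classical one: joint convexity bounds the increment of $\mathcal{L}$ from below by its first variation, and the $p$-Euler--Lagrange equation forces that first variation to vanish.

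First I would fix an arbitrary admissible $y$ and set $\eta := y - y_{\ast}$. Since $y$ and $y_{\ast}$ share the boundary values $\alpha,\beta$, we have $\eta(a)=\eta(b)=0$, so $\eta$ is an admissible variation, and by linearity of $D_p$ we have $y(t^p)=y_{\ast}(t^p)+\eta(t^p)$ and $D_p[y](t)=D_p[y_{\ast}](t)+D_p[\eta](t)$. Writing the increment as $\mathcal{L}[y]-\mathcal{L}[y_{\ast}]=\int_a^b\big(L(t,y(t^p),D_p[y](t))-L(t,y_{\ast}(t^p),D_p[y_{\ast}](t))\big)d_pt$ and applying the joint convexity inequality pointwise, with $u=y_{\ast}(t^p)$, $v=D_p[y_{\ast}](t)$, $u_1=\eta(t^p)$, and $v_1=D_p[\eta](t)$, gives, for each $t\in[a,b]_p$, the estimate $L(t,y(t^p),D_p[y](t))-L(t,y_{\ast}(t^p),D_p[y_{\ast}](t))\ge \partial_2 L\,\eta(t^p)+\partial_3 L\,D_p[\eta](t)$, where the partials are evaluated at $(t,y_{\ast}(t^p),D_p[y_{\ast}](t))$.

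Next I would integrate this pointwise inequality. Because the difference of the two sides is nonnegative at every point of $[a,b]_p$, the positivity statement of the Proposition of Section 2 (if $g\ge 0$ on $[a,b]_p$ then $\int_a^b g\,d_pt\ge 0$) yields $\mathcal{L}[y]-\mathcal{L}[y_{\ast}]\ge\int_a^b\big(\partial_2 L\,\eta(t^p)+\partial_3 L\,D_p[\eta](t)\big)d_pt$. I would then apply the $p$-integration by parts formula \eqref{byparts} to the second term with $f=\partial_3 L(\cdot,y_{\ast}(\cdot^p),D_p[y_{\ast}](\cdot))$ and $g=\eta$; hypotheses (H2)--(H3) guarantee the continuity at $0$ and $1$ needed for \eqref{byparts}, and the boundary contribution $\partial_3 L\,\eta|_a^b$ vanishes since $\eta(a)=\eta(b)=0$. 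Collecting terms, the lower bound becomes $\int_a^b\big(\partial_2 L(t,y_{\ast}(t^p),D_p[y_{\ast}](t))-D_p[\partial_3 L(\cdot,y_{\ast}(\cdot^p),D_p[y_{\ast}](\cdot))](t)\big)\eta(t^p)\,d_pt$, and the $p$-Euler--Lagrange equation \eqref{peulerlagrange} satisfied by $y_{\ast}$ makes the bracket vanish identically on $[a,b]_p$. Hence the lower bound equals $0$ and $\mathcal{L}[y]\ge\mathcal{L}[y_{\ast}]$ for every admissible $y$, i.e. $y_{\ast}$ is a global minimizer.

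The step I expect to be delicate is the passage from the pointwise convexity inequality to the integrated one. This requires that every node appearing in the series defining $\int_a^b\cdot\,d_pt$ actually lie in $[a,b]_p$, so that the convexity inequality (valid at admissible arguments) applies there and the positivity Proposition is applicable; this is precisely what the hypothesis $a,b\in[c]_p$ for a common $c\in I$, together with the standing assumption $0<a<1<b$, secures, by aligning the forward and backward $p$-grids generated by $a$ and $b$. I would verify this grid-compatibility explicitly before invoking monotonicity of the $p$-integral; the remaining manipulations are the routine linearity and integration-by-parts computations indicated above.
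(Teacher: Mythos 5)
Your proof is correct and follows essentially the same route as the paper's: apply the joint convexity inequality under the integral sign, integrate by parts via \eqref{byparts}, kill the boundary term using $\eta(a)=\eta(b)=0$, and invoke the $p$-Euler--Lagrange equation \eqref{peulerlagrange} to make the remaining integrand vanish. The only differences are cosmetic --- you parametrize admissible functions as $y=y_{\ast}+\eta$ rather than starting directly from an admissible variation $\eta$, and you explicitly cite the positivity part of the Proposition of Section 2 to pass from the pointwise convexity inequality to the integrated one, a step the paper leaves implicit.
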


\begin{proof}
	Let $L$ be a jointly convex function in $(u,v)$. Then for any admissible variation $\eta $, we have
	\begin{align*}
	\mathcal{L} \lbrack y_{\ast }+\eta ]-\mathcal{L} \lbrack y_{\ast }] &=\int
	\limits_{a}^{b}\Bigg[ L\bigg(t,y_{\ast }(t^{p})+\eta (t^{p}),D_{p}[y_{\ast
	}](t)+D_{p}[\eta ](t)\bigg)-L\bigg(t,y_{\ast }(t^{p}),D_{p}[y_{\ast }](t)\bigg)\Bigg]
	d_{p}t \\
	&\geq \int \limits_{a}^{b}\Bigg[ \partial _{2}L\bigg(t,y_{\ast
	}(t^{p}),D_{p}[y_{\ast }](t)\bigg)\eta (t^{p})+\partial _{3}L\bigg(t,y_{\ast
	}(t^{p}),D_{p}[y_{\ast }](t)\bigg)D_{p}[\eta ](t)\Bigg] d_{p}t.
	\end{align*}%
	Using integrations by part, formula \eqref{byparts}, we get%
	\begin{align*}
	\mathcal{L} \lbrack y_{\ast }+\eta ]-\mathcal{L} \lbrack y_{\ast }] &\geq
	\int \limits_{a}^{b}\Big[ \partial _{2}L\big(t,y_{\ast }(t^{p}),D_{p}[y_{\ast
	}](t)\big)\eta (t^{p})\Big] d_{p}t \\
	&\quad +\int \limits_{a}^{b}\Big[ \partial
	_{3}L\big(t,y_{\ast }(t^{p}),D_{p}[y_{\ast }](t)\big)D_{p}[\eta ](t)\Big] d_{p}t \\
	&\geq \int \limits_{a}^{b}\Big[ \partial _{2}L\big(t,y_{\ast
	}(t^{p}),D_{p}[y_{\ast }](t)\big)\eta (t^{p})\Big] d_{p}t \\
	&\quad +\partial _{3}L\big(t,y_{\ast }(t^{p}),D_{p}[y_{\ast }](t)\big)\eta
	(t)|_{a}^{b} \\
	&\quad -\int \limits_{a}^{b}D_{p}\partial _{3}L\big(\cdot ,y(\cdot
	^{p}),D_{p}[y](\cdot )\big)(t)\eta (t^{p})d_{p}t.
	\end{align*}%
	Since $y_{\ast }$ satisfies Theorem \ref{theopeulerlagrange} and $\eta $ is an admissible variation, we
	obtain%
	\begin{equation*}
	\mathcal{L} \lbrack y_{\ast }+\eta ]-\mathcal{L} \lbrack y_{\ast }]\geq 0,
	\end{equation*}%
	proving that $y_{\ast }$ is a minimizer of Problem \ref{problem}.
	The same conclusion can be drawn for the concave case.
\end{proof}

\section{An example}
We require that $ p $ is a fixed number different from $ 1 $. For $ a<b $ in $ [a,b]_{p} $, consider the following problem
\begin{equation}
\begin{cases}
\mathcal{L} \lbrack y ] = \int \limits_{a}^{b} \left(t+\frac{1}{2}\left( D_p [y](t)\right)^2\right)d_p t \rightarrow \text{~minimize}\\
y \in \mathcal{Y}^{}([a,b]_{p}, \mathbb{R}) \\
y(a)=a, \\
y(b)=b.
\end{cases}
\end{equation}
If $ y_{\ast } $ is a local minimizer of the problem, then $ y_{\ast } $  satisfies the $ p $-Euler-Lagrange equation:
\begin{equation}
D_p\left[ D_p[y](p\cdot)\right](t)=0 \text{~ for all } t \in [a,b]_{p}.
\end{equation}
It can be easily seen that the function $ y_{\ast }(t)=t $ is a candidate to the solution of this problem. Since the Lagrangian function is jointly convex in $ (u,v) $, then by using Theorem \ref{convextheo}, it follows immediately that the function $ y_{\ast} $ is a minimizer of the problem.

\bibliographystyle{plain}

\begin{thebibliography}{10}

\bibitem{aldwoah2011power}
K.~A. Aldwoah, A.~B. Malinowska, and D.~F.~M. Torres.
\newblock The power quantum calculus and variational problems.
\newblock {\em arXiv preprint arXiv:1107.0344}, 2011.

\bibitem{AnnMan}
M.~H Annaby and Z.~S. Mansour.
\newblock {\em Q-fractional Calculus and Equations}, volume 2056.
\newblock Springer, 2012.

\bibitem{bangerezako2004variational}
G.~Bangerezako.
\newblock Variational q-calculus.
\newblock {\em Journal of Mathematical Analysis and Applications},
  289(2):650--665, 2004.

\bibitem{da2012q}
A.~M. C.~B. da~Cruz and N.~Martins.
\newblock The q-symmetric variational calculus.
\newblock {\em Computers \& Mathematics with Applications}, 64(7):2241--2250,
  2012.

\bibitem{da2018general}
A.~M. C.~B. da~Cruz and N.~Martins.
\newblock General quantum variational calculus.
\newblock {\em Statistics, Optimization \& Information Computing}, 6(1):22--41,
  2018.

\bibitem{da2012higher}
A.~M. C.~B. da~Cruz, N.~Martins, and D.~F.~M. Torres.
\newblock Higher-order hahn’s quantum variational calculus.
\newblock {\em Nonlinear Analysis: Theory, Methods \& Applications},
  75(3):1147--1157, 2012.

\bibitem{Ernstcomp}
T.~Ernst.
\newblock {\em A comprehensive treatment of q-calculus}.
\newblock Springer Science \& Business Media, 2012.

\bibitem{genccturk2019new}
{\.I}.~Gen{\c{c}}t{\"u}rk.
\newblock On a new type $ pq $-calculus.
\newblock {\em arXiv preprint arXiv:1911.10623}, 2019.

\bibitem{HarmanI}
C.~J. Harman.
\newblock Discrete geometric function theory i.
\newblock {\em Applicable analysis}, 7(4):315--336, 1978.

\bibitem{HarmanII}
C.~J. Harman and R.~J. Duffin.
\newblock Discrete geometric function theory ii.
\newblock {\em Applicable analysis}, 9(3):191--203, 1979.

\bibitem{KacCheung}
V.~Kac and P.~Cheung.
\newblock {\em Quantum calculus}.
\newblock Springer Science \& Business Media, 2001.

\bibitem{KocaGenAyd}
K.~Koca, {\.I}.~Gen\c{c}t\"{u}rk, and M.~Ayd{\i}n.
\newblock Complex line $q$-integrals and $q$-green's formula on the plane.
\newblock {\em An. Stiint. Univ. Al. I. Cuza Iasi. Mat. (N.S.)}, Tom.
  LXIV(f.1):27--46, 2018.

\bibitem{malinowska2010hahn}
A.~B. Malinowska and D.~F.~M. Torres.
\newblock The hahn quantum variational calculus.
\newblock {\em Journal of optimization theory and applications},
  147(3):419--442, 2010.

\bibitem{NeaMeh}
A.~Neamaty and M.~Tourani.
\newblock The presentation of a new type of quantum calculus.
\newblock {\em Tbilisi Mathematical Journal}, 10(2):15--28, 2017.

\bibitem{NeaMeh2}
A.~Neamaty and M.~Tourani.
\newblock Some results on $ p $-calculus.
\newblock {\em Tbilisi Mathematical Journal}, 11(1):159--168, 2018.

\bibitem{pashaev2014q}
O.~K. Pashaev and S.~Nalci.
\newblock q-analytic functions, fractals and generalized analytic functions.
\newblock {\em Journal of Physics A: Mathematical and Theoretical},
  47(4):045204, 2014.

\bibitem{yadollahzadeh2019steffensen}
M.~Yadollahzadeh, M.~Tourani, and G.~Karamali.
\newblock On steffensen inequality in $ p $-calculus.
\newblock {\em The Korean Journal of Mathematics}, 27(3):803--817, 2019.

\end{thebibliography}

\end{document}